\numberwithin{equation}{section}
\author{Zhenghui Huo and Brett D. Wick}
\title[Weak-type estimates for the Bergman projection]{Weak-type estimates for the Bergman projection on the polydisc and the Hartogs triangle}
\begin{document}
	\thanks{BDW's research is partially supported by National Science Foundation grants DMS \# 1560955 and DMS \# 1800057.}
\address{Zhenghui Huo, Department of Mathematics and Statistics, The University of Toledo,  Toledo, OH 43606-3390, USA}
\email{zhenghui.huo@utoledo.edu}
\address{Brett D. Wick, Department of Mathematics and Statistics, Washington University in St. Louis,  St. Louis, MO 63130-4899, USA}
\email{wick@math.wustl.edu}
		\newtheorem{thm}{Theorem}[section]
	\newtheorem{cl}[thm]{Claim}
	\newtheorem{lem}[thm]{Lemma}
		\newtheorem{ex}[thm]{Example}
	\newtheorem{de}[thm]{Definition}
	\newtheorem{co}[thm]{Corollary}
	\newtheorem*{thm*}{Theorem}
	\theoremstyle{definition}
		\newtheorem{rmk}[thm]{Remark}

	\maketitle
\begin{abstract}
In this paper, we investigate the weak-type regularity of the Bergman projection. The two domains we focus on are the polydisc and the Hartogs triangle. 

For the polydisc we provide a proof that the weak-type behavior is of ``$L\log L$'' type.  This result is likely known to the experts, but does not appear to be in the literature.

For the Hartogs triangle we show that the operator is of weak-type $(4,4)$; settling the question of the behavior of the projection at this endpoint.  At the other endpoint of interest, we show that the Bergman projection is \textit{not} of weak-type $(\frac{4}{3}, \frac{4}{3})$ and provide evidence as to what the correct behavior at this endpoint might be.

\medskip

\noindent
{\bf AMS Classification Numbers}:  32A07, 32A25, 32A36

\medskip

\noindent
{\bf Key Words}: Bergman projection, Bergman kernel, weak-type estimate, polydisc, Hartogs triangle
\end{abstract}

\section{Introduction}
Let $\Omega\subseteq \mathbb C^n$ be a bounded domain. Let $L^2(\Omega)$ denote the space of square-integrable functions with respect to the Lebesgue measure $dV$ on $\Omega$.  Let $A^2(\Omega)$ denote the subspace of square-integrable holomorphic functions. The Bergman projection $P$ is the orthogonal projection from $L^2(\Omega)$ onto $A^2(\Omega)$. Associated with $P$, there is a unique function $K_\Omega$ on $\Omega\times\Omega$ such that for any $f\in L^2(\Omega)$:
\begin{equation}
P(f)(z)=\int_{\Omega}K_\Omega(z;\bar w)f(w)dV(w).
\end{equation}
Let $P^+$ denote the absolute Bergman projection defined by:
\begin{equation}
P^+(f)(z)=\int_{\Omega}|K_\Omega(z;\bar w)|f(w)dV(w).
\end{equation}
 Mapping properties of $P$ have been an object of considerable interest for many years.  By its definition, the Bergman projection is a $L^2$ bounded operator. It is natural to consider the regularity of $P$ in other settings.
Using known estimates for the Bergman kernel, $L^p$ regularity results have been obtained  in various settings. See \cite{Fefferman,PS,McNeal1,McNeal3,NRSW,McNeal3,McNeal2,MS,CD,EL,BS,Zhenghui2}. In all these results, the domain needs to satisfy certain nice boundary conditions. On some other domains, the projection has only a finite range of mapping regularity.  See for example \cite{KP,BS,Yunus,DebrajY,EM,EM2,CHEN} for recent progress along this line.  

Among the results mentioned above, there are mainly two techniques adopted. One is to use the Schur's test (see for example \cite{Zhu}), where boundedness can be deduced from analyzing the behavior of the absolute Bergman projection on a certain test function $h$. In many cases, one can choose $h$ to be the distance function to the boundary of the domain $\Omega$ or the Bergman kernel on the diagonal.
The second approach is to show that the (absolute) Bergman projection satisfies certain weak-type estimate. For example, if the projection is of weak-type $(1,1)$, then its $L^2$ regularity together with interpolation theorem implies the $L^p$ regularity for $1<p\leq 2$. Since the Bergman projection is self-adjoint,  $L^p$ regularity for $1<p\leq 2$ yields $L^p$ regularity for $1<p<\infty$. 

While both techniques are powerful tools on obtaining $L^p$ regularity results, the Schur's test is unable to tell the weak-type regularity of the operator near the endpoint of its $L^p$ range.
In this paper, we choose the polydisc and the Hartogs triangle as two classical examples and investigate the weak-type regularity of the Bergman projection on them. 

The polydisc serves as a simple example where the Bergman kernel function is of a product form. It is well known that the weak-type behavior of the classical operators in the multi-parameter setting could be very different from the one-parameter case. For instance, the double Hilbert transform $H_1H_2$ on $\mathbb R^2$ and the Hilbert transform $H$ on $\mathbb R$ behave differently near $L^1$: $H$ is of weak-type (1,1) while $H_1H_2$ is of weak-type $L\log^+L$. See for example \cite{Fefferman72}. By the same reason, one should expect the weak-type regularity of the Bergman projection on $\mathbb D^n$ to be different from the Bergman projection on $\mathbb D$. 

The Hartogs triangle $\mathbb H$, on the other hand, is a classical model where the projection has only limited $L^p$ regularity.  It was shown by Chakrabarti and Zeytuncu in \cite{DebrajY} that the Bergman projection on the Hartogs triangle is $L^p$-regular if and only if $\frac{4}{3}<p<4$. Since the Hartogs triangle is biholomorphically equivalent to $\mathbb D\times \mathbb D\backslash\{0\}$, the $L^p$ boundedness of the Bergman projection on $\mathbb H$ can also be related to the regularity of the projection on the weighted space $L^p(\mathbb D^2,|z_2|^{2-p})$. From this perspective, both the product structure of $\mathbb D^2$ and the weight $|z_2|^{2-p}$ may affect the weak-type regularity of the projection near $L^{\frac{4}{3}}$ and $L^4$.

We summarize our results about the Bergman projection $P$ as follows:
\begin{enumerate}
	\item On the bidisc, $P$ is not of weak-type (1,1). (Theorem 3.1)
		\item On the polydisc $\mathbb D^n$, $P$  is of weak-type $L(\log^+L)^{n-1}$. (Theorem 3.7)
	\item On the Hartogs triangle $\mathbb H$, $P$ is \textit{not} of weak-type $({4}/{3},{4}/{3})$. (Theorem 4.1)
	\item On the Hartogs triangle $\mathbb H$, $P$  is of weak-type $(4,4)$. (Theorem 4.2)
	\item For any $\epsilon>0$, $P$ on $\mathbb H$ is bounded from $L^{\frac{4}{3}}(\mathbb H,|z_2|^{-\epsilon})$ to $L^{\frac{4}{3},\infty}(\mathbb H)$. (Theorem 4.6)
\end{enumerate}
Results (1)  and (2) above are not surprising from a multi-parameter analysis perspective, and hence could be known to people. Since we couldn't find them in the literature, we decide to put them here. As a consequence of Result (4), the projection $P$ is  bounded from the Lorentz space $L^{4/3,1}(\mathbb H)$ to $L^{4/3}(\mathbb H)$. See Remark 4.4. Also, we provide refinements of Result (5). See Theorems 4.7 and 4.9. 

 The paper is organized as follows: In Section 2, we recall the definition of the Hartogs triangle and provide lemmas that will be used in the paper. In Section 3, we consider weak-type estimates for the Bergman projection on the bidsc and the polydisc. We give an example in the proof of Theorem 3.1 to show that the projection $P$ is not of weak-type (1,1). We show in Theorem 3.7 that $P$ on the polydisc $\mathbb D^n$ is of weak-type $L(\log^+L)^{n-1}$. In Section 4, we state and prove weak-type results for the Bergman projection on the Hartogs triangle.

\section{Preliminaries}
 Let $\mathbb D$ denote the unit disc in $\mathbb C$. The Bergman kernel on $\mathbb D$ is given by,
 \begin{equation}
 K_{\mathbb D}(z;\bar w)=\frac{1}{\pi(1-z\bar w)^2}, \;\;\;\text{ for } z,w\in \mathbb D.
 \end{equation}
 Since the Bergman kernel on the product domain $\Omega_1\times\Omega_2$ equals the product of the kernel functions on $\Omega_1$ and $\Omega_2$, the Bergman kernel on the polydisc $\mathbb D^n$ is given by,
 \begin{equation}
 K_{\mathbb D^n}(z;\bar w)=\prod_{j=1}^{n}\frac{1}{\pi(1-z_j\bar w_j)^2},\;\;\; \text{ for } z,w\in \mathbb D^n.
 \end{equation}
 The Hartogs triangle $\mathbb H$ is defined by $\mathbb H=\{(z_1,z_2)\in \mathbb C^2:|z_1|<|z_2|<1\}.$
Let $\mathbb D^*$ denote the punctured disc $\mathbb D\backslash \{0\}$. 
The mapping $(z_1,z_2)\mapsto (\frac{z_1}{z_2},z_2)$ is a biholomorphism from $\mathbb H$ onto $\mathbb D\times \mathbb D^*$. The biholomorphic transformation formula (see \cite{Krantz}) then implies that 
	\begin{align}\label{K}
K_{\mathbb H}(z_1,z_2;\bar w_1,\bar w_2)&=\frac{1}{z_2\bar w_2}K_{\mathbb D\times\mathbb D^*}\left(\frac{z_1}{z_2},z_2;\frac{\bar w_1}{\bar w_2},\bar w_2\right)\nonumber\\
&=\frac{1}{z_2\bar w_2}K_{\mathbb D\times\mathbb D}\left(\frac{z_1}{z_2},z_2;\frac{\bar w_1}{\bar w_2},\bar w_2\right)\nonumber\\
&=\frac{1}{\pi^2z_2\bar w_2(1-\frac{z_1\bar w_1}{z_2 \bar w_2})^2(1-z_2\bar w_2)^2}.
\end{align}	
The second equality sign above holds since $A^2(\mathbb D\times\mathbb D^*)$ and $A^2(\mathbb D^2)$ are identical.

	Given functions of several variables $f$ and $g$, we use $f\lesssim g$ to denote that $f\leq Cg$ for a constant $C$. If $f\lesssim g$ and $g\lesssim f$, then we say $f$ is comparable to $g$ and write $f\approx g$. We reference below the Forelli-Rudin estimate. See for example \cite{Zhu} for its proof.
\begin{lem}[Forelli-Rudin]\label{lemma2.1} Let $\sigma$ denote Lebesgue measure on the unit circle $\mathbb S^1\subset\mathbb C$.
	For $\epsilon<1$ and $w\in\mathbb D$, let 
	\begin{equation}\label{**}
	a_{\epsilon,\delta}(w)=\int_{\mathbb D}\frac{(1-|\eta|^2)^{-\epsilon}}{|1-w\bar\eta |^{2-\epsilon-\delta}}dV(\eta),
	\end{equation}
	and let
	\begin{equation}\label{ }
	b_\delta(w)=\int_{\mathbb S^1}\frac{1}{|1-w\bar\eta|^{1-\delta}}d\sigma(\eta).
	\end{equation}
	Then \begin{enumerate}
		\item for $\delta>0$, both $a_{\epsilon,\delta}$ and $b_{\delta}$ are bounded on $\mathbb D$;
		\item for $\delta=0$, both $a_{\epsilon,\delta}(w)$ and $b_{\delta}(w)$ are comparable to the function $-\log(1-|w|^2)$;
		\item for $\delta<0$, both $a_{\epsilon,\delta}(w)$ and $b_{\delta}(w)$ are comparable to the function $(1-|w|^2)^{\delta}$.
	\end{enumerate}
\end{lem}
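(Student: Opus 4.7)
The plan is to reduce both integrals to power series in $|w|^{2}$ and then read off the three regimes from the large-$k$ asymptotics of the Taylor coefficients. The engine is the binomial expansion
$$(1-z)^{-s}=\sum_{k=0}^{\infty}\frac{\Gamma(k+s)}{\Gamma(s)\,k!}\,z^{k},\qquad |z|<1,$$
applied separately to $(1-w\bar\eta)^{-s}$ and $(1-\bar w\eta)^{-s}$ for a suitable $s$, followed by term-by-term integration against $d\sigma$ or $dV$.

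For $b_\delta$ I factor $|1-w\bar\eta|^{-(1-\delta)}=(1-w\bar\eta)^{-(1-\delta)/2}(1-\bar w\eta)^{-(1-\delta)/2}$, expand each factor as a power series, and integrate against $d\sigma$ on $\mathbb S^{1}$. The orthogonality of $\eta^{j}\bar\eta^{k}$ on $\mathbb S^{1}$ kills all off-diagonal terms and leaves
$$b_\delta(w)\;\approx\;\sum_{k=0}^{\infty}\left(\frac{\Gamma\!\bigl(k+\tfrac{1-\delta}{2}\bigr)}{\Gamma\!\bigl(\tfrac{1-\delta}{2}\bigr)\,k!}\right)^{\!2}|w|^{2k}.$$
Stirling's formula gives $\Gamma(k+a)/\Gamma(k+1)\approx k^{a-1}$, so the $k$-th Taylor coefficient is of order $k^{-(1+\delta)}$.

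For $a_{\epsilon,\delta}$ I set $s=(2-\epsilon-\delta)/2$ and run the same expansion, this time integrating in polar coordinates $\eta=re^{i\theta}$. The angular integration again diagonalizes the double sum, and the radial integral $\int_0^1 r^{2k+1}(1-r^2)^{-\epsilon}\,dr=\tfrac12 B(k+1,1-\epsilon)$ (finite precisely because $\epsilon<1$) produces a factor $\Gamma(k+1)\Gamma(1-\epsilon)/\Gamma(k+2-\epsilon)\approx k^{\epsilon-1}$. Multiplying this by $\bigl(\Gamma(k+s)/(\Gamma(s)\,k!)\bigr)^{2}\approx k^{-(\epsilon+\delta)}$ gives a Taylor coefficient of order $k^{-(1+\delta)}$, exactly matching the $b_\delta$ case, so the two integrals have the same asymptotic shape.

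With both power series in hand, the lemma reduces to the standard asymptotics of $S_\alpha(x):=\sum_{k\ge 1}k^{-\alpha}x^{k}$ as $x\to 1^{-}$: $S_\alpha$ is bounded when $\alpha>1$; $S_1(x)=-\log(1-x)$; and $S_\alpha(x)\approx(1-x)^{\alpha-1}$ when $\alpha<1$, the last following from the Taylor expansion of $(1-x)^{\alpha-1}$ together with the principle that two power series with asymptotically comparable non-negative coefficients have comparable sums near $x=1$. Setting $\alpha=1+\delta$ and $x=|w|^{2}$ yields the three cases of the statement. The main technical care is in the Stirling step: one must upgrade the tail estimate $\Gamma(k+a)/\Gamma(k+1)\sim k^{a-1}$ to a uniform two-sided comparison valid for all $k\ge 0$, so that the final symbol is genuinely $\approx$ and not merely $\lesssim$. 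This is elementary because each coefficient is strictly positive, but it is the only nontrivial bookkeeping in the argument.
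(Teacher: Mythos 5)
The paper offers no proof of this lemma---it defers to \cite{Zhu}---and your argument is exactly the standard proof given there: binomial expansion of the kernel, orthogonality of $\eta^j\bar\eta^k$ to diagonalize the double sum, Beta/Gamma (Stirling) asymptotics for the coefficients, and comparison of power series with comparable nonnegative coefficients near $x=1$. The argument is correct as written; the only regime it silently skips is $1-\delta\le 0$ (resp.\ $2-\epsilon-\delta\le 0$), where the expansion exponent $s$ is nonpositive and the coefficient positivity fails, but there the kernel is bounded and the claimed boundedness is immediate.
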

We also recall the weighted inequalities by Bekoll\'e and Bonami in \cite{BB78} for $P$ and $P^+$ on the unit disk:
\begin{lem}[Bekoll\'e-Bonami]
 Let $T_z$ denote the Carleson tent over $z$ in $\mathbb D$ defined as below:
\begin{itemize}
	\item $T_z:=\left\{w\in \mathbb D:\left|1-\bar w\frac{z}{|z|}\right|<1-|z|\right\}$ for $z\neq 0$, and
	\item $T_z:= \mathbb D$ for $z=0$.
\end{itemize} 	Let the weight $\mu$ be a positive, locally integrable function on the $\mathbb D$. Let $1<p<\infty$. Then the following conditions are equivalent:
	\begin{enumerate}
		\item $P:L^p(\mathbb D,\mu)\mapsto L^p(\mathbb D,\mu)$ is bounded;
		\item $P^+:L^p(\mathbb D,\mu)\mapsto L^p(\mathbb D,\mu)$ is bounded;
		\item The Bekoll\'e-Bonami constant $B_p(\mu)$ is finite where: $$B_p(\mu):=\sup_{z\in \mathbb D}\frac{\int_{T_z}\mu(w) dV(w)}{\int_{T_z}dV(w)}\left(\frac{\int_{T_z}\mu^{-\frac{1}{p-1}} (w)dV(w)}{\int_{T_z}dV(w)}\right)^{p-1}.$$
	\end{enumerate} 
\end{lem}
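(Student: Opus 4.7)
The plan is to verify the three implications $(2)\Rightarrow(1)\Rightarrow(3)\Rightarrow(2)$. The first is immediate from the pointwise bound $|Pf(z)|\leq P^+(|f|)(z)$, which shows that any weighted norm estimate for $P^+$ transfers to $P$.

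For $(1)\Rightarrow(3)$, I would fix $z\in\mathbb D$ and test the $L^p(\mu)$-boundedness of $P$ against $f_z=\mu^{-1/(p-1)}\chi_{T_z}$. The key geometric fact is that for $w$ in a suitably chosen ``top'' sub-tent of $T_z$ and for $\eta\in T_z$, the quantity $1-w\bar\eta$ has modulus $\approx 1-|z|$ and argument of bounded oscillation, so that $|K_{\mathbb D}(w;\bar\eta)|\approx V(T_z)^{-1}$ with essentially constant argument. Integrating then gives the pointwise lower bound
\begin{equation*}
|Pf_z(w)|\,\gtrsim\,\frac{1}{V(T_z)}\int_{T_z}\mu^{-1/(p-1)}\,dV
\end{equation*}
on a set of $\mu$-measure $\approx \mu(T_z)$. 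Plugging this into the inequality $\|Pf_z\|_{L^p(\mu)}\lesssim\|f_z\|_{L^p(\mu)}$ and simplifying produces exactly the Bekoll\'e--Bonami quantity; uniformity in $z$ yields $B_p(\mu)<\infty$.

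For the main direction $(3)\Rightarrow(2)$, I would follow the original Bekoll\'e--Bonami strategy of dominating $P^+$ by a dyadic maximal operator on tents. Decompose $\mathbb D$ into a Whitney grid of Carleson boxes $\{Q_k\}$ whose shadows are dyadic arcs on $\partial\mathbb D$, so that for $z\in Q_j$ and $w\in Q_k$ sitting in a common tent $T$ of scale comparable to $Q_k$, one has $|K_{\mathbb D}(z;\bar w)|\lesssim V(T)^{-1}$. Summing the contributions over the tree of tents containing $z$ produces the pointwise estimate
\begin{equation*}
P^+f(z)\,\lesssim\,\sup_{T\ni z}\frac{1}{V(T)}\int_T|f|\,dV\;=:\;M_{\mathrm{tent}}f(z).
\end{equation*}
A covering-lemma argument parallel to the proof of Muckenhoupt's $A_p$ theorem, carried out on the Bergman tree rather than on Euclidean cubes, then shows that $M_{\mathrm{tent}}\colon L^p(\mathbb D,\mu)\to L^p(\mathbb D,\mu)$ is bounded precisely when $B_p(\mu)<\infty$.

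The main technical obstacle is the pointwise domination of $P^+$ by $M_{\mathrm{tent}}$: organizing the integral over $w$ by the Carleson scale on which $|1-z\bar w|$ is realized and checking that the resulting geometric series in scale telescopes into a single supremum (rather than picking up a divergent factor from the number of dyadic levels) is what forces the use of either a Bergman-tree/dyadic-martingale framework or a careful Whitney scheme with a summability estimate against $(1-|w|)^{-2}$. The remaining steps, namely the test-function computation above and the Muckenhoupt-style equivalence between the weighted maximal inequality and $B_p(\mu)<\infty$, are then routine.
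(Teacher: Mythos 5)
First, a point of comparison: the paper does not prove this lemma at all --- it is quoted from Bekoll\'e and Bonami \cite{BB78} and used as a black box, so there is no ``paper's proof'' to measure your argument against. Judged on its own terms, your treatment of $(2)\Rightarrow(1)$ and of the necessity $(1)\Rightarrow(3)$ via the test functions $f_z=\mu^{-1/(p-1)}\chi_{T_z}$ is the standard and correct route (modulo the usual care in choosing the sub-tent on which the kernel has no cancellation).

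The main direction $(3)\Rightarrow(2)$, however, contains a genuine gap: the pointwise domination $P^+f(z)\lesssim M_{\mathrm{tent}}f(z)$ is false. Take $f\equiv 1$: by the Forelli--Rudin estimate (Lemma \ref{lemma2.1} with $\epsilon=\delta=0$) one has $P^{+}1(z)=\pi^{-1}\int_{\mathbb D}|1-z\bar w|^{-2}\,dV(w)\approx -\log(1-|z|^{2})$, which blows up as $|z|\to 1$, whereas $M_{\mathrm{tent}}1\equiv 1$. This is exactly the logarithmic divergence you flag as ``the main technical obstacle'': the sum over the $\approx\log\frac{1}{1-|z|}$ dyadic scales of tents containing $z$ does \emph{not} telescope into a single supremum, and no Whitney scheme or Bergman-tree bookkeeping can make it do so, since the obstruction is already visible on the constant function. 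The standard repairs are either (a) to dominate $P^{+}$ by a \emph{sum} of localized averages over a sparse family of tents, $P^{+}f\lesssim\sum_{Q\in\mathcal S}\langle|f|\rangle_{Q}\,\chi_{Q}$, and then bound this sparse (positive dyadic) operator on $L^{p}(\mu)$ under $B_{p}(\mu)<\infty$ by a Carleson-embedding or stopping-time argument, or (b) to follow Bekoll\'e's original good-$\lambda$ / Calder\'on--Zygmund argument, which never passes through a pointwise maximal bound. Either way, the passage from averages to $P^{+}$ is precisely where the content of the theorem lies, and your proposal currently asserts it in a form that is false rather than proving it.
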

We end this section by recalling the definitions of the weak $L^p$ space, weak-type $(p,p)$, and the $L(\log^+L)^k$ space. Given a subset $U$ in the domain $\Omega$ and let $\mu$ be a measure on $\Omega$. We use the notation $\mu(U)$ to denote the $\mu$-measure of $U$. When $\mu$ is the Lebesgue measure, we will simply write $|U|$. 
\begin{de}
Let $(X,\mu)$ be a measure space. For $0<p<\infty$, the weak $L^p$ space $L^{p,\infty}(X,\mu)$ is defined as the set of all $\mu$-measurable functions $f$ such that
\begin{align}
\|f\|_{L^{p,\infty}}=\inf\left\{C>0:\mu\{x\in X:|f(x)|>\lambda\}\leq\frac{C^p}{\lambda^p} \;\;\text{ for all } \lambda>0\right\}<\infty.
\end{align}

\end{de}
\begin{de}
	Let $(X,\mu)$ and $(Y,\nu)$ be two measure spaces. Let $0<p<\infty$ and $0<q<\infty$. An operator $T$ that is said to be of weak-type $(p,q)$ if $T$ is bounded from $L^p(X,\mu)$ to $L^{q,\infty}(Y,\nu)$, i.e. for any $f\in L^p(X,\mu)$ and any $\lambda>0$, 
	\begin{equation}\label{2.8}
	\nu(\{y\in Y:|T(f)(y)|>\lambda\})\lesssim\frac{\|f\|^q_{L^p(X,\mu)}}{\lambda^q}.
	\end{equation}\end{de}

\begin{de}
Set $\log^+ x:=\begin{cases}0& x=0\\\max\{0,\log x\}& x>0.\end{cases}$ Let $\mathcal L^p(\log^+\mathcal L)^k(\Omega)$ be the set of all functions $f$ on $\Omega$ satisfying $\int_{\Omega} |f(z)|^p(\log^+|f(z)|)^kdV<\infty$. We define the Orlicz space $L^p(\log^+L)^k(\Omega)$ to be the linear hull of  $\mathcal L^p(\log^+\mathcal L)^k(\Omega)$ with the norm
$$\|f\|_{L^p(\log^+L)^k(\Omega)}=\inf\left\{\lambda>0:\int_{\Omega} \left|{f(z)}/{\lambda}\right|^p\left(\log^+\left|{f(z)}/{\lambda}\right|\right)^kdV(z)\leq 1\right\}.$$
We say an operator $T$ is of weak-type $L^p(\log^+L)^k$ on $\Omega$ if for any $f\in L^p(\log^+L)^k(\Omega)$ and any $\lambda>0$,
\begin{equation}\label{2.9}|\{z\in \Omega:|T(f)(z)|>\lambda\}|\lesssim \frac{\|f\|^p_{L^p(\log^+L)^k(\Omega)}}{\lambda^p}.\end{equation}
\end{de}
For more details about the Orlicz space, see for example \cite{Rao}. 
\begin{rmk}It is worth noting that when $\nu$ is a finite measure and $\lambda$ is chosen to be small, inequalities (\ref{2.8}) and (\ref{2.9}) trivially holds. Since all the domains involved in this paper are bounded and hence have finite Lebesgue measure, we only need to check (\ref{2.8}) for large $\lambda$ to prove the weak-type results.\end{rmk}
\section{Weak-type estimates for the Bergman projection on the polydisc} 
The results in this section are not surprising from a multi-parameter analysis perspective, and could be known to experts. Since we couldn't find them in the literature, we decide to put them here with their proofs.
\begin{thm}
	The Bergman projection $P$ on the bidisc $\mathbb D^2$ is not of weak-type $(1,1)$.
\end{thm}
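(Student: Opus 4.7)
The plan is to construct a family of $L^1$ functions $f_\epsilon$ on $\mathbb{D}^2$ whose Bergman projections witness the failure of weak-type $(1,1)$ as $\epsilon \to 0^+$. The natural candidates are normalized indicators of Carleson-type bidiscs anchored at the corner $(1,1)\in\partial(\mathbb{D}^2)$: set
$$f_\epsilon = \epsilon^{-4}\chi_{E_\epsilon},\qquad E_\epsilon = \{w\in\mathbb{D}^2 : |1-w_1|<\epsilon,\ |1-w_2|<\epsilon\},$$
so that $\|f_\epsilon\|_{L^1(\mathbb{D}^2)}\approx 1$ uniformly in $\epsilon$.

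The first step is to pin down $|Pf_\epsilon(z)|$ on the bulk region $R_\epsilon:=\{z\in\mathbb{D}^2:|1-z_j|\ge M\epsilon,\ j=1,2\}$ for a suitably large constant $M$. Writing $\alpha_j=1-z_j$ and $\beta_j=1-w_j$, a direct expansion gives $1-z_j\bar w_j=\alpha_j(1+\eta_j)$ with $|\eta_j|\le 2|\beta_j|/|\alpha_j|\le 2/M$ for $w\in E_\epsilon$. Choosing $M$ large enough so that the product $(1+\eta_1)^{-2}(1+\eta_2)^{-2}$ stays in a sector of the complex plane of opening strictly less than $\pi$ (e.g.\ $M=20$ is ample), one concludes that the integrand of $Pf_\epsilon$ does not suffer cancellation on $E_\epsilon$, and hence
$$|Pf_\epsilon(z)|\approx \frac{1}{|1-z_1|^2|1-z_2|^2},\qquad z\in R_\epsilon.$$

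The second step is a level-set computation. For $\lambda=\epsilon^{-2}$, the inequality $|Pf_\epsilon(z)|>\lambda$ is implied by $|1-z_1|\,|1-z_2|<c\epsilon$, so the superlevel set contains
$$S_\epsilon := \{z\in R_\epsilon : |1-z_1|\,|1-z_2|<c\epsilon\}.$$
Introducing $r_j=|1-z_j|$ near the corner $(1,1)$, so that $dV(z)\approx r_1 r_2\,dr_1\,dr_2\,d\theta_1\,d\theta_2$, a direct integration over the hyperbolic region $\{r_1,r_2\ge M\epsilon,\ r_1 r_2<c\epsilon\}$ produces a leading term $\tfrac{c^2\epsilon^2}{2}\log\tfrac{c}{M^2\epsilon}$, so that
$$|S_\epsilon|\gtrsim \epsilon^2\log(1/\epsilon).$$

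Comparing with the right-hand side of the weak-type $(1,1)$ inequality, $\|f_\epsilon\|_{L^1}/\lambda\approx\epsilon^2$, the ratio blows up like $\log(1/\epsilon)$ as $\epsilon\to 0^+$. Since $\lambda=\epsilon^{-2}\to\infty$, the trivial bound coming from the finite total measure of $\mathbb{D}^2$ (cf.\ the Remark preceding the theorem) does not save the inequality. The main obstacle lies in the first step: verifying that the complex-valued integrand $(1-z_1\bar w_1)^{-2}(1-z_2\bar w_2)^{-2}$ does not cancel so that one obtains a genuine \emph{lower} bound on $|Pf_\epsilon|$ in the bulk. The separation condition $|\alpha_j|\ge M|\beta_j|$ is precisely what guarantees this; as a safety net one can alternatively restrict $z$ to have real positive components close to $1$, which makes the positivity of the integrand transparent and delivers the same conclusion.
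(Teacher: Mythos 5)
Your argument is correct: the family $f_\epsilon=\epsilon^{-4}\chi_{E_\epsilon}$ of normalized indicators of corner Carleson boxes, the no-cancellation estimate $|Pf_\epsilon(z)|\approx |1-z_1|^{-2}|1-z_2|^{-2}$ on the bulk region, and the level-set computation producing the factor $\log(1/\epsilon)$ all hold up, and together they do refute the weak-type $(1,1)$ inequality. The paper exploits the same phenomenon (mass concentrating at a boundary point of the distinguished boundary, with the superlevel set of $|1-z_1|^{-2}|1-z_2|^{-2}$ acquiring a logarithmic factor from $\int_{\mathbb D}|1-sz|^{-2}\,dV(z)\approx\log\frac{1}{1-s^2}$), but with a different test function: it takes $f_s=\pi^4(1-s^2)^4|K_{\mathbb D^2}(s,s;\bar w)|^2$, for which the reproducing property gives the \emph{exact} identity $P(f_s)(z)=(1-sz_1)^{-2}(1-sz_2)^{-2}$, so no cancellation analysis is needed at all. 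Your approach trades that algebraic convenience for a sector/argument estimate (the step you correctly flag as the delicate one), which is slightly more work but also more robust — it only uses that the kernel is comparable in modulus and slowly varying near the corner, not the reproducing property, so it would survive perturbations of the operator. One small caution on your closing aside: restricting $z$ to real positive components would collapse the superlevel set to a null set, so if you want that "safety net" you must keep a full neighborhood in the angular variables (which the separation $|1-z_j|\ge M\epsilon$ already lets you do via the sector argument); as written, the primary argument is the one that should carry the proof.
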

\begin{proof}
By (\ref{2.8}),	it suffices to show that there exists a parameter family of integrable functions $\{f_s\}$ on $\mathbb D^2$ satisfying the inequality below:
	\begin{equation}\label{3.1}
	|\{(z_1,z_2)\in \mathbb D^2:|P(f_s)(z_1,z_2)|>\lambda\}|\geq\frac{C_s\|f_s\|_{L^1}}{\lambda},
	\end{equation}
	where $C_s$ can be arbitrarily large.
	For $1>s>0$, we set $$f_s(w)=(1-s^2)^4|1-sw_1|^{-4}|1-sw_2|^{-4}={\pi^4(1-s^2)^4}|K_{\mathbb D^2}(s,s;\bar w_1, \bar w_2)|^2.$$ Hence  $\|f_s\|_{L^1}=\pi^4(1-s^2)^4K_{\mathbb D^2}(s,s;s,s)=\pi^2$. On the other hand, it is easy to see that \begin{align}P(f_s)(z_1,z_2)&=\int_{\mathbb D^2}\frac{(1-s^2)^4(1-s\bar w_1)^{-2}(1-s\bar w_2)^{-2}}{\pi^2(1-z_1\bar w_1)^2(1-z_2\bar w_2)^2}\frac{1}{(1-sw_1)^2(1-sw_2)^2}dV(w_1,w_2)\nonumber\\&=\overline{P\left(\frac{(1-s^2)^4(1-sw_1)^{-2}(1-sw_2)^{-2}}{(1-\bar z_1 w_1)^2(1-\bar z_2 w_2)^2}\right)(s,s)}\nonumber\\&=\frac{(1-s^2)^4(1-s^2)^{-2}(1-s^2)^{-2}}{(1-sz_1)^2(1-sz_2)^2}=(1-sz_1)^{-2}(1-sz_2)^{-2}.\end{align}
	Therefore
	 $\{(z_1,z_2)\in \mathbb D^2:|P(f_s)(z_1,z_2)|>\lambda\}=\left\{(z_1,z_2)\in \mathbb D^2:|1-sz_1|^{-2}|1-sz_2|^{-2}>\lambda\right\}.$
	Set $U_{\lambda,s}=\left\{(z_1,z_2)\in \mathbb D^2:|1-sz_2|^2<|1-sz_1|^{-2}\lambda^{-1} \text { and } 2|1-sz_1|<(1-s)^{-1}\lambda^{-1/2}\right\}$. Then 
\begin{align}\label{3.2}
&\left|\left\{(z_1,z_2)\in \mathbb D^2:|1-sz_1|^{-2}|1-sz_2|^{-2}>\lambda\right\}\right|\nonumber\\\geq&|U_{t,s}|=
\int_{\left\{z_1\in\mathbb D:2|1-sz_1|<(1-s)^{-1}\lambda^{-1/2}\right\}}\int_{\left\{z_2\in \mathbb D:|1-sz_2|<{|1-sz_1|^{-1}\lambda^{-1/2}}\right\}}dV(z_2)dV(z_1).
\end{align}
By a change of the variable $z_2=\frac{i-w_2}{i+w_2}$, we have 
\begin{align}
&\int_{\left\{z_2\in\mathbb D:|1-sz_2|<{|1-sz_1|^{-1}}\lambda^{-1/2}\right\}}dV(z_2)\nonumber\\=&\int_{\left\{w_2\in \mathbb C:\text{Im}(w_2)>0,\frac{|(1-s)i+(1+s)w_2|}{|i+w_2|}<\frac{1}{|1-sz_1|\lambda^{1/2}}\right\}}\frac{4}{|i+w_2|^4}dV(w_2).
\end{align}
When $|w_2|<1$, we have $|i+w|\approx 1$ and $|(1-s)i+(1+s)w_2|\leq (1+s)|w_2|+(1-s)$. Combining these inequalities with the fact that $2|1-sz_1|<(1-s)^{-1}\lambda^{-1/2}$ for $(z_1,z_2)\in U_{t,s}$, there holds
\begin{align}\label{3.4}
&\int_{\left\{w_2\in \mathbb C:\text{Im}(w_2)>0,\frac{|(1-s)i+(1+s)w_2|}{|i+w_2|}<\frac{1}{|1-sz_1|\lambda^{1/2}}\right\}}\frac{4}{|i+w_2|^4}dV(w_2)\nonumber\\\geq
&\int_{\left\{w_2\in \mathbb C:|w_2|<1,\text{Im}(w_2)>0,\frac{|(1-s)i+(1+s)w_2|}{|i+w_2|}<\frac{1}{|1-sz_1|\lambda^{1/2}}\right\}}\frac{4}{|i+w_2|^4}dV(w_2)\nonumber\\\gtrsim& \int_{\left\{w_2\in \mathbb C:|w_2|<1,\text{Im}(w_2)>0,{|w_2|}<\frac{1}{|1-sz_1|\lambda^{1/2}}-(1-s)\right\}}dV(w_2)\nonumber\\\gtrsim& \left(\frac{1}{|1-sz_1|\lambda^{1/2}}-(1-s)\right)^2\gtrsim {|1-sz_1|^{-2}}\lambda^{-1}.
\end{align}
Applying inequalities (\ref{3.4}) and Lemma 2.1 to (\ref{3.2}) and choose $\lambda=16^{-1}(1-s)^{-2}$ yield
\begin{align}
|U_{t,s}|\gtrsim&\int_{\left\{z_1\in\mathbb D:|1-sz_1|<2\right\}}{|1-sz_1|^{-2}}\lambda^{-1}dV(z_1)\nonumber\\=
&\int_{\mathbb D}{|1-sz_1|^{-2}}\lambda^{-1}dV(z_1)\approx
\frac{1}{\lambda}\log\left(\frac{1}{1-s^2}\right).
\end{align}
Thus \begin{align*}&|\{(z_1,z_2)\in \mathbb D^2:|P(f_s)(z_1,z_2)|>\lambda\}|\gtrsim \frac{1}{\lambda}\log\left(\frac{1}{1-s}\right)=\frac{(-\log {(1-s)})\|f_s\|_{L^1}}{\pi^2\lambda}.\end{align*} Since $(-\log (1-s))$ approaches $\infty$ as $s$ tends to $1$, (\ref{3.1}) holds and the proof is complete.
\end{proof}
\begin{rmk} Using the same example in the proof of Theorem 3.1, one can also show that Theorem 3.1 holds true for the polydisc case.\end{rmk}

The positive result for the weak-type estimate of the Bergman projection is a consequence of the following two theorems from \cite{DHZZ}. 
\begin{thm}[{\hspace{1sp}\cite[Theorem 1.1]{DHZZ}}]
	The Bergman projection is of weak-type $(1,1)$ on the unit disc.
\end{thm}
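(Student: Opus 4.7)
The plan is a Calder\'on--Zygmund-type decomposition adapted to the Carleson tent geometry of $\mathbb D$. Fix $f \in L^1(\mathbb D)$ and (invoking the remark at the end of Section~2) a large $\lambda > 0$. I would first fix a Whitney-type dyadic family $\mathcal T$ of Carleson tents on $\mathbb D$, obtained from a dyadic decomposition of $\partial \mathbb D$ combined with a radial dyadic partition, and consider the associated maximal operator
$$Mf(z) = \sup_{T \ni z,\, T \in \mathcal T}\frac{1}{|T|}\int_T |f|\,dV.$$
A standard Vitali-type argument yields the weak-$(1,1)$ bound $|\{Mf > \lambda\}| \lesssim \|f\|_1/\lambda$. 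A stopping-time selection then produces a disjoint family of maximal bad tents $\{T_j\}\subset \mathcal T$ with $\frac{1}{|T_j|}\int_{T_j}|f|\,dV \approx \lambda$ whose union is $\{Mf > \lambda\}$. Decompose $f = g + b$, where $b = \sum_j b_j$ with $b_j = (f - f_{T_j})\chi_{T_j}$ (so $\int_{T_j} b_j = 0$) and $g$ absorbs both the averages $f_{T_j}$ on each $T_j$ and $f$ on the complement. Then $\|g\|_\infty \lesssim \lambda$ and $\|g\|_1 \leq \|f\|_1$, whence $\|g\|_2^2 \lesssim \lambda\|f\|_1$, and the $L^2$-boundedness of $P$ combined with Chebyshev handles the good part:
$$\bigl|\{|Pg| > \lambda/2\}\bigr| \lesssim \|f\|_1/\lambda.$$

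For the bad part, I inflate each $T_j$ to a concentric double $T_j^*$ with $|T_j^*| \lesssim |T_j|$, chosen large enough that $|1 - z\bar w| \approx |1 - z\bar z_j|$ whenever $w \in T_j$ and $z \in \mathbb D \setminus T_j^*$, where $z_j$ is a fixed reference point of $T_j$ with $1-|z_j|$ comparable to the scale of $T_j$. Then $|\bigcup_j T_j^*| \lesssim \|f\|_1/\lambda$, so it suffices to show $\sum_j \int_{\mathbb D \setminus T_j^*}|Pb_j|\,dV \lesssim \|f\|_1$. Using the cancellation $\int_{T_j} b_j = 0$, write
$$Pb_j(z) = \int_{T_j}\bigl[K_{\mathbb D}(z,\bar w) - K_{\mathbb D}(z,\bar z_j)\bigr]b_j(w)\,dV(w).$$
The identity $\partial_{\bar w}K_{\mathbb D}(z,\bar w) = 2z/\bigl[\pi(1-z\bar w)^3\bigr]$, the mean value theorem along the segment from $\bar z_j$ to $\bar w$, and the separation of scales on $\mathbb D \setminus T_j^*$ yield
$$|K_{\mathbb D}(z,\bar w) - K_{\mathbb D}(z,\bar z_j)| \lesssim \frac{1-|z_j|}{|1-z\bar z_j|^3}.$$
Integrating in $z$ and invoking the Forelli--Rudin estimate (Lemma~\ref{lemma2.1} with $\delta = -1$) gives $\int_{\mathbb D}|1-z\bar z_j|^{-3}\,dV(z) \lesssim (1-|z_j|)^{-1}$, so $\int_{\mathbb D \setminus T_j^*}|Pb_j|\,dV \lesssim \|b_j\|_1$. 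Summing and using $\sum_j\|b_j\|_1 \lesssim \|f\|_1$, another application of Chebyshev completes the estimate.

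The main obstacle is the geometric setup: one must verify that the Carleson tents carry enough structure (bounded overlap, a doubling property, and a dyadic parent--child relation inherited from $\partial\mathbb D$ and the radial direction) to support the maximal inequality, the stopping-time decomposition, and the separation $|1-z\bar w|\approx|1-z\bar z_j|$ outside the doubled tent. Once this scaffolding is built, the analytic input is minimal: only the $L^2$-boundedness of $P$, a single $\bar w$-derivative of $K_{\mathbb D}$, and the $\delta=-1$ case of the Forelli--Rudin lemma are needed.
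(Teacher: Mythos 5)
The paper gives no proof of this statement to compare against: it is imported verbatim from \cite{DHZZ} and used as a black box in the proofs of Theorems 3.5 and 3.6. On its own terms, your Calder\'on--Zygmund scheme is the standard route to this result and the outline is sound. The analytic ingredients all check out: $\partial_{\bar w}K_{\mathbb D}(z,\bar w)=2z/[\pi(1-z\bar w)^{3}]$ is correct; the $\delta=-1$, $\epsilon=0$ case of Lemma~\ref{lemma2.1} gives $\int_{\mathbb D}|1-z\bar z_j|^{-3}\,dV(z)\approx(1-|z_j|^{2})^{-1}$, which exactly cancels the factor $1-|z_j|$ produced by the mean value theorem, so the bad-part estimate closes; and the good part follows from $\|g\|_{2}^{2}\le\|g\|_{\infty}\|g\|_{1}\lesssim\lambda\|f\|_{1}$ together with the $L^{2}$-boundedness of $P$. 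Two of the verifications you defer deserve explicit mention, since they are the only places where the disc geometry could bite. First, $|1-z\bar w|$ does not vanish on the diagonal in the interior (it is bounded below by $1-|z|^{2}$), so the dyadic family cannot be taken as metric balls for $|1-z\bar w|$; it must be built exactly as you indicate, from dyadic boundary arcs crossed with a radial dyadic partition. With that construction the boxes have measure comparable to the square of their arc length, every point lies in boxes of arbitrarily small diameter (so $|f|\le\lambda$ a.e.\ off the bad set by Lebesgue differentiation), and the stopping-time averages are trapped between $\lambda$ and $C\lambda$ by doubling of parent boxes. Second, the mean value theorem is applied along the straight segment from $z_j$ to $w$, so the comparability $|1-z\bar\eta|\approx|1-z\bar z_j|$ is needed on the convex hull of $T_j$, not merely on $T_j$ itself; this still holds for $z$ outside a suitable fixed dilate $T_j^{*}$ by the quasi-triangle inequality for $|1-z\bar w|$, but the dilation constant must be chosen with the hull in mind. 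With those routine points supplied your argument is complete, and it would make the polydisc results of Section~3 self-contained rather than dependent on the external citation.
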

\begin{thm}[{\hspace{1sp}\cite[Theorem 1.3]{DHZZ}}]
	The Bergman projection is a bounded operator from $L\log^+L(\mathbb D)$ to $L^1(\mathbb D)$.
\end{thm}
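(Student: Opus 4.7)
The plan is a direct Fubini argument that reduces the statement to an Orlicz duality between $L\log^+L(\mathbb D)$ and $\exp L(\mathbb D)$. Given $f\in L\log^+L(\mathbb D)$, I would first dominate $|Pf|$ pointwise by $P^+(|f|)$ and apply Fubini-Tonelli to obtain
\[\|Pf\|_{L^1(\mathbb D)}\leq\int_{\mathbb D}|f(w)|\left(\int_{\mathbb D}|K_{\mathbb D}(z;\bar w)|\,dV(z)\right)dV(w).\]

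The inner integral is exactly $\frac{1}{\pi}a_{0,0}(w)$ in the notation of Lemma \ref{lemma2.1}, so case (2) of Forelli-Rudin gives
\[\int_{\mathbb D}|K_{\mathbb D}(z;\bar w)|\,dV(z)=\frac{1}{\pi}\int_{\mathbb D}\frac{dV(z)}{|1-z\bar w|^2}\approx\log\frac{1}{1-|w|^2}.\]
It therefore suffices to establish the weighted $L^1$ bound
\[\int_{\mathbb D}|f(w)|\log\frac{1}{1-|w|^2}\,dV(w)\lesssim\|f\|_{L\log^+L(\mathbb D)}.\]
For this I would invoke the Orlicz-H\"older inequality for the complementary Young functions $\Phi(t)=t\log^+t$ and $\Psi(t)=e^t-1$, which reduces matters to showing that $g(w):=\log\frac{1}{1-|w|^2}$ lies in $\exp L(\mathbb D)$. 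A direct computation in polar coordinates confirms this: for every $\lambda>1$,
\[\int_{\mathbb D}\exp\!\left(\tfrac{1}{\lambda}g(w)\right)dV(w)=\int_{\mathbb D}(1-|w|^2)^{-1/\lambda}\,dV(w)=\frac{\pi\lambda}{\lambda-1}<\infty,\]
so the Luxemburg norm $\|g\|_{\exp L(\mathbb D)}$ is an absolute constant and the theorem follows.

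No serious obstacle is anticipated. The argument is essentially forced once one observes that the $L^1$-mass in $z$ of the Bergman kernel $K_{\mathbb D}(\cdot,\bar w)$ grows only logarithmically in $w$, and that $-\log(1-|w|^2)$ is precisely a function whose exponential is integrable on $\mathbb D$. The only care needed is tracking the constants in Orlicz-H\"older so as to match the Luxemburg-norm formulation of $\|f\|_{L\log^+L}$ used in the definition above; this is routine.
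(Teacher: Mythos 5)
Your proof is correct, but note that the paper does not actually prove this statement: it is imported as Theorem 1.3 of \cite{DHZZ}, and the closest in-paper analogue, Theorem 3.6 (boundedness from $L(\log^+L)^{k+1}(\mathbb D)$ to $L(\log^+L)^{k}(\mathbb D)$), is proved by a genuinely different method --- splitting $f=f_1+f_2$ at height $t$, applying the weak-type $(1,1)$ bound of Theorem 3.3 to $f_1$ and the $L^2$ bound to $f_2$, and integrating the resulting distribution-function inequality in $t$. Your route instead rests only on the size of the kernel: Fubini reduces everything to $\int_{\mathbb D}|K_{\mathbb D}(z;\bar w)|\,dV(z)=\pi^{-1}a_{0,0}(w)$, which by Lemma 2.1(2) is $\approx \log\frac{e}{1-|w|^2}$ (keep the additive constant, since $a_{0,0}(0)=\pi$ while $-\log(1-|w|^2)$ vanishes at the origin; the extra term is harmless because $\|f\|_{L^1(\mathbb D)}\lesssim\|f\|_{L\log^+L(\mathbb D)}$ on a bounded domain, exactly as in the claim inside the proof of Theorem 3.6), and then the H\"older inequality for the complementary Young pair $t\log^+t$ and $e^s-1$ together with $\int_{\mathbb D}(1-|w|^2)^{-1/\lambda}\,dV(w)=\pi\lambda/(\lambda-1)$ finishes the argument. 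Your approach is more elementary --- it bypasses the nontrivial weak-$(1,1)$ theorem entirely --- and proves the formally stronger fact that the absolute projection $P^+$ maps $L\log^+L(\mathbb D)$ into $L^1(\mathbb D)$. What it gives up is generality: the duality argument is wedded to the target $L^1$ and does not by itself produce the graded scale $L(\log^+L)^{k+1}\to L(\log^+L)^{k}$ of Theorem 3.6, for which the distribution-function method is the natural tool. The only step to spell out in a final write-up is the Orlicz--H\"older inequality in terms of the Luxemburg norm used in the paper's definition (the Young complement of $t\log^+t$ equals $e^{s-1}$ for $s\ge1$ and $s$ for $0\le s\le1$, hence is dominated by $e^s-1$, so your computation gives $\|{-\log(1-|w|^2)}\|_{\Psi}\le 1+\pi$); as you say, this is routine bookkeeping.
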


\begin{thm}
The Bergman projection on the bidisc $\mathbb D^2$ is of weak-type $L\log^+ L$. 
\end{thm}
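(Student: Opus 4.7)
The plan is to exploit the product structure of the Bergman kernel on $\mathbb D^2$ and write $P$ as the iterated composition $P_1\circ P_2$, where $P_j$ denotes the one-variable Bergman projection acting in the $j$-th variable. Since $K_{\mathbb D^2}(z;\bar w)=K_{\mathbb D}(z_1;\bar w_1)K_{\mathbb D}(z_2;\bar w_2)$, Tonelli gives $P(f)(z_1,z_2)=P_1\bigl(h(\cdot,z_2)\bigr)(z_1)$, where $h(w_1,z_2):=P_2(f)(w_1,z_2)$, whenever the iterated integral converges absolutely. I would combine the weak-type $(1,1)$ estimate for $P_1$ on $\mathbb D$ with the $L\log^+L\to L^1$ estimate for $P_2$ on $\mathbb D$ (the two results from \cite{DHZZ} recalled just above), and then close the argument with a Fubini-type manipulation at the level of the two-parameter Orlicz norm.

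Concretely, fix $\lambda>0$. Applying the one-dimensional weak-type $(1,1)$ estimate to the function $w_1\mapsto h(w_1,z_2)$ slice-by-slice in $z_2$ yields
$$\bigl|\{z_1\in\mathbb D:|P(f)(z_1,z_2)|>\lambda\}\bigr|\lesssim\frac{\|h(\cdot,z_2)\|_{L^1(\mathbb D)}}{\lambda}.$$
Integrating in $z_2$ and interchanging the order of integration by Tonelli gives
$$\bigl|\{(z_1,z_2)\in\mathbb D^2:|P(f)(z_1,z_2)|>\lambda\}\bigr|\lesssim\frac{1}{\lambda}\int_{\mathbb D}\bigl\|P_2\bigl(f(w_1,\cdot)\bigr)\bigr\|_{L^1(\mathbb D)}\,dV(w_1).$$
Then applying the one-dimensional $L\log^+L\to L^1$ bound to the slice $f(w_1,\cdot)$ for each fixed $w_1$ reduces the desired weak-type estimate to the Fubini-type inequality
$$\int_{\mathbb D}\|f(w_1,\cdot)\|_{L\log^+L(\mathbb D)}\,dV(w_1)\lesssim \|f\|_{L\log^+L(\mathbb D^2)}.$$

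For this last inequality I would use the elementary slice bound $\|g\|_{L\log^+L(\mathbb D)}\le 1+\int_{\mathbb D}|g|\log^+|g|\,dV$, valid on the unit disc: taking $\mu=\max\bigl(1,\int_{\mathbb D}|g|\log^+|g|\,dV\bigr)$, one checks $\int_{\mathbb D}\Phi(|g|/\mu)\,dV\le1$ for $\Phi(t)=t\log^+t$, using that $\log^+(t/\mu)\le\log^+t$ whenever $\mu\ge1$. Applying this slicewise to $f(w_1,\cdot)/\Lambda$ with $\Lambda:=\|f\|_{L\log^+L(\mathbb D^2)}$, integrating in $w_1$, and invoking the defining condition $\int_{\mathbb D^2}\Phi(|f|/\Lambda)\,dV\le1$ of the Luxemburg norm, I would obtain $\int_{\mathbb D}\|f(w_1,\cdot)/\Lambda\|_{L\log^+L(\mathbb D)}\,dV(w_1)\lesssim1$, from which the desired inequality follows by homogeneity of the norm.

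The main obstacle will be this Orlicz-space Fubini step. The Luxemburg norm is defined by an infimum and does not literally commute with integration in one variable, so one must descend to the modular level, argue slicewise via the bound above, and then carefully rescale; everything else is a routine combination of the two cited one-dimensional estimates with Tonelli's theorem.
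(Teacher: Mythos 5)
Your proposal is correct and follows essentially the same route as the paper: factor $P=P_1\circ P_2$, apply the one-variable weak-type $(1,1)$ estimate slicewise, integrate, and then use the $L\log^+L(\mathbb D)\to L^1(\mathbb D)$ bound together with Fubini. The only difference is that you spell out the Orlicz--Fubini inequality $\int_{\mathbb D}\|f(w_1,\cdot)\|_{L\log^+L(\mathbb D)}\,dV(w_1)\lesssim\|f\|_{L\log^+L(\mathbb D^2)}$ via the modular bound $\|g\|_{L\log^+L(\mathbb D)}\le 1+\int_{\mathbb D}|g|\log^+|g|\,dV$, a step the paper leaves implicit in its final $\lesssim$; your treatment of it is valid.
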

\begin{proof}
Let $P_1$ and $P_2$ denote the Bergman projection in variable $z_1$ and $z_2$ respectively. Then the Bergman projection $P$ on the bidisc $\mathbb D^2$ equals $P_1\circ P_2$. By Theorems 3.3, 3.4 and Fubini's theorem, we have for all $f\in L^1(\mathbb D^2)$ 
\begin{align*}
&|\{(z_1,z_2)\in\mathbb D^2:|P(f)(z_1,z_2)|>\lambda\}|\\=&|\{(z_1,z_2)\in\mathbb D^2:|P_1\circ P_2(f)(z_1,z_2)|>\lambda\}|\\=&\int_{\mathbb D}|\{z_1\in\mathbb D:|P_1\circ P_2(f)(z_1,z_2)|>\lambda\}|d V(z_2)\lesssim\frac{\|P_2(f)\|_{L^1}}{\lambda}\lesssim\frac{\|f\|_{L\log^+L}}{\lambda}.
\end{align*}
\end{proof}
By slightly modifying of the proof of Theorem 3.4, one also obtains the following theorem:
\begin{thm}
For $k\in \mathbb N$, the Bergman projection is bounded from $L(\log^+L)^{k+1}(\mathbb D)$ to $L(\log^+L)^{k}(\mathbb D)$.
\end{thm}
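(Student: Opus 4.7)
The plan is to deduce the theorem from the modular-level inequality
$$\int_{\mathbb D}|Pf(z)|(\log^+|Pf(z)|)^{k}\,dV(z) \;\lesssim\; \int_{\mathbb D}|f(w)|(\log^+|f(w)|)^{k+1}\,dV(w)+C,$$
valid whenever the right side is finite; by homogeneity of the Orlicz functional this upgrades to the claimed boundedness from $L(\log^+L)^{k+1}(\mathbb D)$ to $L(\log^+L)^{k}(\mathbb D)$.

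The approach parallels the classical Marcinkiewicz-type argument. First I would rewrite the left side via the layer cake formula as
$$\int_{1}^{\infty}\psi_k(t)\,|\{z\in\mathbb D:|Pf(z)|>t\}|\,dt,\qquad \psi_k(t):=(\log t)^{k}+k(\log t)^{k-1},$$
with $\psi_k$ equal almost everywhere to the derivative of $t\mapsto t(\log t)^k$ on $(1,\infty)$. For each $t>1$ I would split $f=g_t+b_t$ with $g_t:=f\chi_{\{|f|\le t/2\}}$ and $b_t:=f\chi_{\{|f|>t/2\}}$, then apply Theorem 3.3 to the bad piece to obtain $|\{|Pb_t|>t/2\}|\lesssim t^{-1}\int_{\{|f|>t/2\}}|f|\,dV$, and combine the $L^{2}$-boundedness of $P$ with Chebyshev's inequality on the good piece to obtain $|\{|Pg_t|>t/2\}|\lesssim t^{-2}\int_{\{|f|\le t/2\}}|f|^{2}\,dV$.

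Inserting these two bounds into the layer cake expression and swapping the $t$- and $w$-integrations by Fubini, matters reduce to two elementary scalar estimates. The bad-part calculation $\int_{1}^{2r}\psi_k(t)/t\,dt\approx (\log^+r)^{k+1}$ for $r\ge 1$ produces precisely $\int_{\mathbb D}|f|(\log^+|f|)^{k+1}\,dV$. For the good part, integration by parts gives $\int_{2r}^{\infty}\psi_k(t)/t^{2}\,dt\approx (\log^+r)^{k}/r$ for $r\ge 1$, so the factor $|f|^2$ collapses to $|f|(\log^+|f|)^{k}$; this is dominated by $|f|(\log^+|f|)^{k+1}$ on $\{|f|\ge e\}$ and by a uniform constant on $\{|f|<e\}$ since $\mathbb D$ has finite measure.

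The main obstacle I anticipate is the good-part contribution: a bare $L^{2}$-bound by itself does not distinguish the correct power of $\log^+|f|$, and it is only after the integration-by-parts identity above that one factor of $|f|$ is absorbed and the sharp $(\log^+|f|)^{k}$ emerges in balance with $|f|^{2}$. Once the modular inequality is in hand, rescaling $f$ by its $L(\log^+L)^{k+1}$-Orlicz norm (using that $\Phi_k(t/\lambda)\le \Phi_k(t)/\lambda$ for $\lambda\ge 1$ and $\Phi_k(t):=t(\log^+t)^k$) delivers the theorem for arbitrary $f\in L(\log^+L)^{k+1}(\mathbb D)$.
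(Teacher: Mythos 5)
Your proposal is correct and follows essentially the same route as the paper's proof: the same good/bad truncation of $f$ at height comparable to $t$, the weak-type $(1,1)$ bound (Theorem 3.3) on the large part and the $L^2$ bound with Chebyshev on the small part, followed by integration of the distribution function against $\psi_k(t)=(\log t)^k+k(\log t)^{k-1}$ and a Fubini swap, with the leftover $\int_{\mathbb D}|f|(\log^+|f|)^k\,dV$ term absorbed using the finiteness of $|\mathbb D|$. The only difference is cosmetic bookkeeping: you package the result as a modular inequality and rescale by the Orlicz norm via $\Phi_k(t/\lambda)\le\Phi_k(t)/\lambda$ for $\lambda\ge 1$, whereas the paper normalizes $\|f\|_{L(\log^+L)^{k+1}}=1$ and treats the cases $\|Pf\|_{L(\log^+L)^k}\le 1$ and $>1$ separately.
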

\begin{proof} It suffices to show that the Bergman projection is bounded from the unit sphere of  $L(\log^+L)^{k+1}(\mathbb D)$ to $L(\log^+L)^{k}(\mathbb D)$.
Given $f\in L(\log^+L)^{k+1}(\mathbb D)$ with $\|f\|_{L(\log^+L)^{k+1}(\mathbb D)}$ equal to $1$, the definition of the Orlicz norm  $\|\cdot\|_{L(\log^+L)^{k+1}(\mathbb D)}$ implies:
$$1=\|f\|_{L(\log^+L)^{k+1}(\mathbb D)}=\int_{\mathbb D}|f(z)|\left(\log^+|f(z)|\right)^{k+1}dV(z).$$
If $\|P(f)\|_{L(\log^+L)^k(\mathbb D)}\leq1$, then $\|P(f)\|_{L(\log^+L)^k(\mathbb D)}\leq \|f\|_{L(\log^+L)^{k+1}(\mathbb D)}$ and the theorem is proved. 
We turn to consider the case when $\|P(f)\|_{L(\log^+L)^k(\mathbb D)}=\lambda>1$. We show that in this case, the estimate $\|P(f)\|_{L(\log^+L)^k(\mathbb D)}\lesssim\|f\|_{L(\log^+L)^{k+1}(\mathbb D)}$ still holds. 

For a fixed $t>0$, we set $$f_1(z)=\begin{cases}
f(z)& |f(z)|>t\\0& \text{otherwise}
\end{cases},\;\; \text{ and }\;\;  f_2(z)=\begin{cases}
0& |f(z)|>t\\f(z)& \text{otherwise.}
\end{cases}$$ Then $f(z)=f_1(z)+f_2(z)$. For a function $g$ on $\mathbb D$ and a fixed $t>0$, let $g_*(t)$ denote the distribution function: $g_*(t):=|\{z\in \mathbb D:g(z)>t\}|.$
Since the Bergman projection is $L^2$ bounded by its definition and of weak-type (1,1) by Theorem 3.3, we have
\begin{align*}
&(P(f))_*(t)=|\{z\in\mathbb D:|P(f)(z)|>t\}|\\\leq& |\{z\in\mathbb D:|P(f_1)(z)|>\frac{t}{2}\}|+ |\{z\in\mathbb D:|P(f_2)(z)|>\frac{t}{2}\}|\\\lesssim&\frac{\|f_1\|_{L^1}}{t}+\frac{\|f_2\|^2_{L^2}}{t^2}
=\frac{\int_t^\infty f_*(s)ds}{t}+\frac{2\int_0^t sf_*(s)ds}{t^2}.
\end{align*}
Multiplying both sides of the inequality by $(\log t)^{k}+k(\log t)^{k-1}$ and integrating them from 1 to $\infty$ yields:
\begin{align}\label{3.9}
&\int_1^\infty(P(f))_*(t)((\log t)^{k}+k(\log t)^{k-1})dt\nonumber\\\lesssim&\int_1^\infty((\log t)^{k}+k(\log t)^{k-1})\left(\frac{\int_t^\infty f_*(s)ds}{t}+\frac{2\int_0^t sf_*(s)ds}{t^2}\right)dt\nonumber\\=&\int_1^\infty(\frac{(\log s)^{k+1}}{k+1}+{(\log s)^k})f_*(s)ds+2\int_0^\infty\int_{\max\{1,s\}}^\infty\frac{(\log t)^{k}+k(\log t)^{k-1}}{t^2}sf_*(s)dtds\nonumber\\\lesssim&\int_1^\infty(\frac{(\log s)^{k+1}}{k+1}+{(\log s)^k})f_*(s)ds+\int_0^\infty\int_{\max\{1,s\}}^\infty\frac{(\log t)^{k}+1}{t^2}sf_*(s)dtds.
\end{align}
Since $\int_{\max\{1,s\}}^\infty\frac{(\log t)^{k}+1}{t^2}dt=-(\frac{1}{t}+\sum_{j=0}^{k}\frac{k!}{j!t}(\log t)^j)|_{\max\{1,s\}}^\infty$, there holds
$$\int_0^\infty\int_{\max\{1,s\}}^\infty\frac{(\log t)^{k}+1}{t^2}sf_*(s)dtds\lesssim\int_{\mathbb D}|f(z)|(\log^+|f(z)|)^{k}dV(z).$$
We claim that $\int_{\mathbb D}|f(z)|(\log^+|f(z)|)^{k}dV(z)\lesssim 1$. Assume the claim is true.  Then applying this claim and the fact that  $\int_1^\infty(\frac{(\log s)^{k+1}}{k+1}+{(\log s)^k})f_*(s)ds=\|f\|_{L(\log^+ L)^{k+1}}$ into (\ref{3.9}) yields the estimate:
\begin{align}\label{3.10}\int_1^\infty(P(f))_*(t)((\log t)^{k}+k(\log t)^{k-1})dt\lesssim \|f\|_{L(\log^+ L)^{k+1}}.\end{align}
Since $\|P(f)\|_{L(\log^+L)^k}=\lambda$, there holds that
$$\int_{\mathbb D}\left|P\left(\frac{f}{\lambda}\right)(z)\right|\left(\log^+\left|P\left(\frac{f}{\lambda}\right)(z)\right|\right)^kdV(z)=1.$$
Thus $$\int_{\mathbb D}|P({f})(z)|(\log^+|P(\frac{f}{\lambda})(z)|)^kdV(z)=\lambda=\|P(f)\|_{L(\log^+L)^k(\mathbb D)}.$$ Note also that $\lambda>1$. Therefore
\begin{align}\label{3.11}\|P(f)\|_{L(\log^+L)^k(\mathbb D)}&=\int_{\mathbb D}|P({f})(z)|(\log^+|P(\frac{f}{\lambda})(z)|)^kdV(z)\nonumber\\&\leq
\int_{\mathbb D}|P({f})(z)|(\log^+|P({f})(z)|)^kdV(z)\nonumber\\&=\int_1^\infty(P(f))_*(t)((\log t)^{k}+k(\log t)^{k-1})dt.\end{align}
Combining inequalities (\ref{3.10}) and (\ref{3.11}) yields the desired estimate:
$$\|P(f)\|_{L(\log^+L)^k(\mathbb D)}\lesssim \|f\|_{L(\log^+L)^{k+1}}.$$
We turn to prove the claim. By H\"older's inequality, there holds:
\begin{align*}\int_{\mathbb D}|f(z)|(\log^+|f(z)|)^{k}dV(z)&\leq\left(\int_{\mathbb D}|f(z)|(\log^+|f(z)|)^{k+1}dV(z)\right)^{\frac{k}{k+1}}\left(\int_{\mathbb D}|f(z)|dV(z)\right)^{\frac{1}{k+1}}\nonumber\\&=\left(\int_{\mathbb D}|f(z)|dV(z)\right)^{\frac{1}{k+1}}.\end{align*}
Since \begin{align*}\int_{\mathbb D}|f(z)|dV(z)&=\int_{\{z\in \mathbb D:|f(z)<e|\}}|f(z)|dV(z)+\int_{\{z\in \mathbb D:|f(z)\geq e|\}}|f(z)|dV(z)\\&\leq e|\mathbb D|+\int_{\{z\in \mathbb D:|f(z)\geq e|\}}|f(z)|(\log^+|f|)^{k+1}dV(z)\leq e\pi+1\approx 1,\end{align*} there holds
$\int_{\mathbb D}|f(z)|(\log^+|f(z)|)^{k}dV(z)\lesssim 1$ and the proof is complete.
\end{proof}
Theorem 3.6 together with the argument in the proof of Theorem 3.5 then gives the weak-type estimate for the Bergman projection on the polydisc:
\begin{thm}
	The Bergman projection on the polydisc $\mathbb D^n$ is of weak-type $L(\log^+L)^{n-1}$. 
\end{thm}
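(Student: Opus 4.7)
The plan is to factor $P = P_1 \circ P_2 \circ \cdots \circ P_n$, where $P_j$ denotes the one-variable Bergman projection acting in the $z_j$ coordinate with all other variables held fixed; this factorization is valid because $K_{\mathbb D^n}$ is a product kernel. The two inputs I expect to combine are the weak-type $(1,1)$ estimate on the disc (Theorem 3.3), to be applied in the variable $z_1$ at the end, and the bound $P : L(\log^+L)^{k+1}(\mathbb D) \to L(\log^+L)^k(\mathbb D)$ from Theorem 3.6, to be applied $n-1$ times in the variables $z_n, z_{n-1}, \ldots, z_2$ to strip off one logarithmic factor at each step.

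For the first step I would mimic the proof of Theorem 3.5 verbatim in the $z_1$ slot. Writing $z' = (z_2, \ldots, z_n)$ and $Q := P_2 \circ \cdots \circ P_n$, Fubini and the slice-wise weak-type $(1,1)$ bound give
\begin{align*}
|\{z \in \mathbb D^n : |P(f)(z)| > \lambda\}| &= \int_{\mathbb D^{n-1}} |\{z_1 \in \mathbb D : |P_1(Qf)(z_1, z')| > \lambda\}|\, dV(z') \\
&\lesssim \frac{\|Qf\|_{L^1(\mathbb D^n)}}{\lambda},
\end{align*}
so the task reduces to bounding $\|Qf\|_{L^1(\mathbb D^n)}$ by $\|f\|_{L(\log^+L)^{n-1}(\mathbb D^n)}$.

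For the second step I would iterate Theorem 3.6 in the variables $z_n, z_{n-1}, \ldots, z_2$, applying it slice-wise in each variable and using Fubini between successive applications. Each application peels off one power of $\log^+$, so the chain $L(\log^+L)^{n-1} \to L(\log^+L)^{n-2} \to \cdots \to L\log^+L \to L^1$ should yield the desired bound. The main obstacle I anticipate is the bookkeeping of the iterated Orlicz-norm inequalities across Fubini, since the Luxemburg norm is defined by an infimum and does not tensorize transparently. The cleanest way around this, I expect, is to replace the Luxemburg-norm version of Theorem 3.6 by an integral form $\int_{\mathbb D} |Pg|(\log^+|Pg|)^{k}\, dV \lesssim \int_{\mathbb D} |g|(\log^+|g|)^{k+1}\, dV + O(1)$ (obtained from Theorem 3.6 by the same scaling argument used inside its proof), so that each slice-wise inequality can simply be integrated over the remaining variables to produce the next link in the chain. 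Once that bookkeeping is handled, combining the two steps yields the weak-type $L(\log^+L)^{n-1}$ estimate.
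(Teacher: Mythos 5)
Your proposal is correct and is essentially the paper's own argument: the paper proves this theorem by combining Theorem 3.6 with the Fubini-plus-weak-type-$(1,1)$ scheme from the bidisc case (Theorem 3.5), exactly as you describe. Your suggestion to pass to the integral form of Theorem 3.6 in order to iterate across Fubini is the right way to handle the Luxemburg-norm bookkeeping (together with normalizing $\|f\|_{L(\log^+L)^{n-1}}=1$ by homogeneity), and the needed integral inequality is precisely estimate (3.10) established inside the paper's proof of Theorem 3.6.
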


\section{Weak-type estimates for the Bergman projection on $\mathbb H$}
\begin{thm}
	The Bergman projection on the Hartogs triangle is not of weak-type $(\frac{4}{3},\frac{4}{3})$.
\end{thm}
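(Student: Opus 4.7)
The plan is to exhibit, in the spirit of Theorem 3.1, a one-parameter family $\{f_s\}_{0<s<1}\subset L^{4/3}(\mathbb H)$ for which the weak-type inequality with $p=q=4/3$ fails by a factor tending to infinity as $s\to 0^+$. The failure of $L^{4/3}$-boundedness originates at the singular boundary point $(0,0)$, so the test function should concentrate mass near $z_2=0$; a clean candidate is
\[
f_s(z_1,z_2)\,=\,\frac{\bar z_2}{|z_2|^4}\,\mathbf 1_{\{s<|z_2|<1\}}(z_2),
\]
which, via Fubini and the fact that the slice $\{z_1:|z_1|<|z_2|\}$ has area $\pi|z_2|^2$, satisfies $\|f_s\|^{4/3}_{L^{4/3}(\mathbb H)}=2\pi^2\log(1/s)$.

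The main step is to evaluate $Pf_s$ explicitly. I would push the problem to the bidisc via $\phi(z_1,z_2)=(z_1/z_2,z_2)$: the kernel transformation (\ref{K}) together with the Jacobian $|z_2|^2$ of $\phi^{-1}$ gives, for any $f\in L^2(\mathbb H)$,
\[
Pf(z_1,z_2)=\frac{1}{z_2}\,P_{\mathbb D^2}(F)\!\left(\frac{z_1}{z_2},z_2\right),\qquad F(u_1,u_2):=u_2\,f(u_1u_2,u_2).
\]
For our choice one computes $F_s(u_1,u_2)=|u_2|^{-2}\mathbf 1_{\{s<|u_2|<1\}}$, a function which depends only on $u_2$ and is radial in that variable. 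Since the Bergman projection on $\mathbb D$ of a radial function retains only the $n=0$ Fourier coefficient of the reproducing kernel, a one-line computation yields $P_{\mathbb D^2}F_s\equiv 2\log(1/s)$, and therefore
\[
Pf_s(z_1,z_2)=\frac{2\log(1/s)}{z_2}.
\]

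With this formula the failure is immediate. The sublevel set $\{z\in\mathbb H:|Pf_s(z)|>\lambda\}=\{|z_2|<2\log(1/s)/\lambda\}\cap\mathbb H$ equals all of $\mathbb H$ as soon as $\lambda\le 2\log(1/s)$, and $|\mathbb H|=\pi^2/2$. Taking $\lambda_0:=2\log(1/s)$, a weak-type $(4/3,4/3)$ bound would force
\[
\frac{\pi^2}{2}\;\lesssim\;\frac{\|f_s\|^{4/3}_{L^{4/3}(\mathbb H)}}{\lambda_0^{4/3}}\,=\,\frac{2\pi^2\log(1/s)}{\bigl(2\log(1/s)\bigr)^{4/3}}\,\approx\,(\log(1/s))^{-1/3},
\]
whose right-hand side tends to $0$ as $s\to 0^+$ while the left-hand side is a fixed positive constant, a contradiction. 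The only delicate point I anticipate is justifying the push-forward identity for $Pf_s$ (i.e.\ that $P_{\mathbb H}f_s$ really equals $2\log(1/s)/z_2$ pointwise), but this is routine because a direct computation shows $f_s\in L^2(\mathbb H)$ for every fixed $s\in(0,1)$, so the $L^2$-projection and the change of variables under $\phi$ interchange freely.
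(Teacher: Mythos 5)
Your proof is correct and follows essentially the same strategy as the paper's: a test function depending radially on $z_2$ alone, concentrating at the singular point, whose Bergman projection is an explicit constant times $1/z_2$, so that the superlevel sets and their measures can be computed exactly. The only real difference is how the borderline function $\bar z_2|z_2|^{-4}$ is regularized --- the paper perturbs the exponent ($f_p=\bar z_2|z_2|^{-p'}$ with $p\downarrow 4/3$, which forces the joint choice $p=4/3+\lambda^{-9/10}$ at the end), whereas you truncate at $|z_2|=s$ and test at the single natural level $\lambda=2\log(1/s)$, a slightly cleaner endgame; your computations (the norm $2\pi^2\log(1/s)$, the identity $P_{\mathbb D^2}F_s\equiv 2\log(1/s)$ via radiality, and $|\mathbb H|=\pi^2/2$) all check out.
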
 
\begin{proof}
For a constant $p>\frac{4}{3}$, let $p^\prime=\frac{p}{p-1}$ denote its conjugate index. Set $f_p(z)=\bar z_2|z_2|^{-p^\prime}$. Then 
\begin{align}\|f_p\|^{\frac{4}{3}}_{L^{\frac{4}{3}}}=&\int_{\mathbb H}|z_2|^{\frac{4}{3}(1-p^\prime)}dV(z_1,z_2)=\int_{\mathbb D^2}|z_2|^{2+\frac{4}{3}(1-p^\prime)}dV(z_1,z_2)=\frac{\pi^2(p-1)}{4(p-\frac{4}{3})}.\end{align}
Given $(z_1,z_2)\in \mathbb H$, 
\begin{align}
|P(f_p)(z_1,z_2)|=\int_{\mathbb H}\sum_{a+b\geq -1,a\geq 0}\frac{(z_1\bar w_1)^a(z_2\bar w_2)^b}{\|w_1^aw_2^b\|^2_{L^2}}\bar w_2|w_2|^{-p^\prime}dV(w_1,w_2).
\end{align}
Since the Hartogs triangle is a Reinhardt domain,  it's easy to check using polar coordinates that
$\int_{\mathbb H} \bar w_1^a\bar w_2^b\bar w_2|w_2|^{1-p^\prime}dV(w_1,w_2)\neq 0$
if and only if $a=0$ and $b=-1$. Thus 
\begin{align}
|P(f_p)(z_1,z_2)|&=\left|\int_{\mathbb H}\frac{1}{z_2\bar w_2\|w_2^{-1}\|^2_{L^2}}\bar w_2|w_2|^{-p^\prime}dV(w_1,w_2)\right|\nonumber\\&=\pi^{-2}\left|\int_{\mathbb H}\frac{1}{z_2}|w_2|^{-p^\prime}dV(w_1,w_2)\right|=\frac{p-1}{(3p-{4})|z_2|}.
\end{align}
Note that for $\frac{p-1}{(3p-{4})\lambda}<1$,
\begin{align}
|\{(z_1,z_2)\in\mathbb H:|P(f_p)(z_1,z_2)|>\lambda\}|=&\int_{\left\{(z_1,z_2)\in\mathbb H:|z_2|<\frac{p-1}{(3p-4)\lambda}\right\}}dV(z_1,z_2)
\nonumber\\=&\int_{\left\{(z_1,z_2)\in\mathbb D^2:|z_2|<\frac{p-1}{(3p-{4})\lambda}\right\}}|z_2|^2dV(z_1,z_2)\nonumber\\=&\frac{\pi^2}{4}\left(\frac{p-1}{(3p-{4})\lambda}\right)^4\approx\frac{\|f_p\|^{{4}/{3}}_{L^{{4}/{3}}}}{\lambda^{{4}/{3}}}\frac{1}{(p-{4}/{3})^3\lambda^{{8}/{3}}}.
\end{align}
Setting $p={4}/{3}+{\lambda^{-{9}/{10}}}$, then ${(p-1)}(3p-{4})^{-1}\lambda^{-1}$ still goes to 0 as $\lambda$ tends to $\infty$. Hence ${(p-1)}(3p-{4})^{-1}\lambda^{-1}<1$ holds. On the other hand, 
${(p-{4}/{3})^{-3}\lambda^{-{8}/{3}}}=\lambda^{{1}/{30}},$
which is blowing up as $\lambda$ tends to $\infty$. Therefore, the weak-type estimate 
$$|\{(z_1,z_2)\in\mathbb H:|P(f_p)(z_1,z_2)|>\lambda\}|\lesssim \frac{ \|f_p\|^{{4}/{3}}_{L^{{4}/{3}}}}{\lambda^{{4}/{3}}}$$
fails and the Bergman projection on $\mathbb H$ is not of weak-type $({4}/{3},{4}/{3})$.
\end{proof}

\begin{thm}
	The Bergman projection on the Hartogs triangle is of weak-type $({4},{4})$.
\end{thm}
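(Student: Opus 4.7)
The plan is to decompose the Bergman kernel $K_{\mathbb H}$ into a ``singular'' piece $K_0$ isolating the $\frac{1}{z_2 \bar w_2}$ factor (which drives the failure at $L^{4/3}$ in Theorem 4.1) and a ``regular'' remainder $K_1$ with the same $\frac{1}{z_2\bar w_2}$ already cancelled. Using the shorthand $\zeta_j = \Phi_j(z)$, $\eta_j = \Phi_j(w)$ from the biholomorphism $\Phi\colon \mathbb H\to\mathbb D\times\mathbb D^*$, I would write
$$K_0(z,\bar w):=\frac{K_{\mathbb D}(\zeta_1,\bar\eta_1)}{\pi z_2\bar w_2},\qquad K_1:=K_{\mathbb H}-K_0=\frac{K_{\mathbb D}(\zeta_1,\bar\eta_1)(2-z_2\bar w_2)}{\pi(1-z_2\bar w_2)^2},$$
the second identity being a direct consequence of (\ref{K}) together with the telescoping $(1-B)^{-2}-1=B(2-B)(1-B)^{-2}$. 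Let $T_0,T_1$ be the corresponding integral operators, so $P=T_0+T_1$. It will suffice to prove that $T_1\colon L^4(\mathbb H)\to L^4(\mathbb H)$ is bounded and that $T_0\colon L^4(\mathbb H)\to L^{4,\infty}(\mathbb H)$ is bounded.

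For the regular piece $T_1$, applying the change of variables $w=\Phi^{-1}(\eta)$ (with Jacobian $|\eta_2|^2$) and setting $h(\eta):=|\eta_2|^2 f(\eta_1\eta_2,\eta_2)$ gives
$$T_1 f(\Phi^{-1}(\zeta))=\int_{\mathbb D^2}K_{\mathbb D}(\zeta_1,\bar\eta_1)\,\frac{2-\zeta_2\bar\eta_2}{\pi(1-\zeta_2\bar\eta_2)^2}\,h(\eta)\,dV(\eta)=\bigl(P_{\mathbb D}\otimes S\bigr)(h)(\zeta),$$
where $S$ has kernel $(2-z\bar w)/(\pi(1-z\bar w)^2)$ on $\mathbb D$. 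Because $|S(z,\bar w)|\leq 3/(\pi|1-z\bar w|^2)$, $S$ is dominated pointwise by a multiple of $P^+_{\mathbb D}$ and is therefore $L^4(\mathbb D)$-bounded; tensoring with the $L^4$-bounded operator $P_{\mathbb D}$ gives $L^4(\mathbb D^2)$-boundedness of $P_{\mathbb D}\otimes S$. Since $|\zeta_2|\leq 1$ on $\mathbb D^2$, we bound $\|T_1 f\|_{L^4(\mathbb H)}^4=\int|(P_{\mathbb D}\otimes S)h|^4|\zeta_2|^2\,dV\leq \|(P_{\mathbb D}\otimes S)h\|_{L^4(\mathbb D^2)}^4\lesssim\|h\|_{L^4(\mathbb D^2)}^4$, and a second change of variables gives $\|h\|_{L^4(\mathbb D^2)}^4=\int_{\mathbb H}|z_2|^6|f|^4\,dV\leq\|f\|_{L^4(\mathbb H)}^4$.

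For the singular piece $T_0$, the same substitution yields
$$T_0 f(\Phi^{-1}(\zeta))=\frac{P_{\mathbb D}(\widetilde H)(\zeta_1)}{\pi\zeta_2},\qquad \widetilde H(\eta_1):=\int_{\mathbb D}\eta_2\, f(\eta_1\eta_2,\eta_2)\,dV(\eta_2).$$
Using $\int_{\{|\zeta_2|<r\}}|\zeta_2|^2\,dV(\zeta_2)\lesssim r^4$ and Fubini, I would then estimate
$$|\{z\in\mathbb H:|T_0f(z)|>\lambda\}|=\int_{\mathbb D^2}\mathbf 1_{\{|P_{\mathbb D}(\widetilde H)(\zeta_1)|>\pi\lambda|\zeta_2|\}}|\zeta_2|^2\,dV(\zeta)\lesssim\frac{\|P_{\mathbb D}(\widetilde H)\|_{L^4(\mathbb D)}^4}{\lambda^4}\lesssim\frac{\|\widetilde H\|_{L^4(\mathbb D)}^4}{\lambda^4},$$
the last step invoking $L^4$-boundedness of $P_{\mathbb D}$. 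Finally, Minkowski's integral inequality in $\eta_2$ followed by the substitution $z_1=\eta_1\eta_2$ (which turns $\int_{\mathbb D}|f(\eta_1\eta_2,\eta_2)|^4\,dV(\eta_1)$ into $|\eta_2|^{-2}\int_{\{|z_1|<|\eta_2|\}}|f(z_1,\eta_2)|^4\,dV(z_1)$) and H\"older with conjugate exponents $4$ and $4/3$ (absorbing the resulting $|\eta_2|^{2/3}$ factor) produce $\|\widetilde H\|_{L^4(\mathbb D)}\lesssim\|f\|_{L^4(\mathbb H)}$.

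The main obstacle I anticipate is the Minkowski--H\"older chain for $\widetilde H$: the key is tracking the Jacobian $|\eta_2|^{-2}$ that appears when converting the inner $L^4_{\eta_1}$ norm to a slice integral over $\mathbb H$, and then choosing the H\"older split so that $|\eta_2|^{2/3}$ is integrable on $\mathbb D$ while the remaining integrand reassembles as $\|f\|_{L^4(\mathbb H)}^4$ via Fubini. Once this is done, $P=T_0+T_1$ is the sum of a weak-$(4,4)$ operator and a strong-$(4,4)$ operator and thus weak-$(4,4)$, completing the proof.
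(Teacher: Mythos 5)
Your argument is correct. It proves the same reduction as the paper — everything hinges on transferring to $\mathbb D^2$, where the prefactor $1/|z_2|$ played off against the Jacobian measure $|z_2|^2\,dV$ yields $\int_0^{M/\lambda}r^3\,dr\approx (M/\lambda)^4$, with $M$ controlled by an $L^4$-bounded one-variable operator — but you organize it by a different decomposition. The paper splits the \emph{spatial region} into $|z_2|\le \tfrac12$ and $|z_2|>\tfrac12$: on the first piece it uses the crude comparison $|1-z_2\bar w_2|^{-2}\approx 1$ to dominate $P_{\mathbb D^2}$ by $P^+_{\mathbb D}$ acting on $G(w_1)=\int_{\mathbb D}|g(w_1,w_2)|\,dV(w_2)$, and on the second piece $1/|z_2|$ is bounded and Chebyshev with the strong $L^4$ bound for $P_{\mathbb D^2}$ finishes. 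You instead split the \emph{kernel}, $K_{\mathbb H}=K_0+K_1$, peeling off exactly the $\frac{1}{z_2\bar w_2}$ term responsible for the endpoint failure (the same $b=-1$ monomials that drive the counterexample in Theorem 4.1); your algebraic identity for $K_1$ checks out, $T_1$ is genuinely strong $(4,4)$ since $|2-\zeta_2\bar\eta_2|\le 3$ makes $S$ dominated by $P^+_{\mathbb D}$, and for $T_0$ the $\eta_2$-integration is exact rather than an inequality, producing the single-variable function $\widetilde H$. Your Minkowski--H\"older chain for $\|\widetilde H\|_{L^4(\mathbb D)}\lesssim\|f\|_{L^4(\mathbb H)}$ is valid: the Jacobian $|\eta_2|^{-2}$ contributes $|\eta_2|^{-1/2}$ after the fourth root, the leftover $|\eta_2|^{1/2}$ becomes the integrable weight $|\eta_2|^{2/3}$ in the $L^{4/3}$ slot, and the $L^4$ slot reassembles to $\|f\|_{L^4(\mathbb H)}^4$ by Fubini. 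What your route buys is a cleaner structural statement — the projection is a strong $(4,4)$ operator plus a rank-one-in-$z_2$ singular piece that is only weak $(4,4)$ — at the cost of slightly more bookkeeping in the kernel algebra; the paper's spatial split avoids computing the remainder kernel but needs the two-variable operator $P_{\mathbb D^2}$ (and its $L^4(\mathbb D^2)$ boundedness) on the outer region.
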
 
\begin{proof}
Let $f$ be an arbitrary function in $L^4(\mathbb H)$. Then
\begin{align}
\|f\|^4_{L^4}=&\int_{\mathbb H}|f(z_1,z_2)|^4dV(z_1)dV(z_2)\nonumber\\=&\int_{\mathbb D^2}|f(z_1z_2,z_2)|^4|z_2|^2dV(z_1)dV(z_2)\nonumber\\=&\int_{\mathbb D^2}|z_2f(z_1z_2,z_2)|^4|z_2|^{-2}dV(z_1)dV(z_2).\end{align}
Set $g(z_1,z_2)=z_2f(z_1z_2,z_2)$. Then $g\in L^4(\mathbb D^2,|z_2|^{-2}dV)$ and $\|g\|_{L^4(\mathbb D^2,|z_2|^{-2}dV)}=\|f\|_{L^4(\mathbb H)}.$ 
Note that
\begin{align}
|P(f)(z_1,z_2)|&=\left|\int_{\mathbb H}\frac{f(w_1,w_2)}{\pi^2z_2\bar w_2(1-\frac{z_1\bar w_1}{z_2 \bar w_2})^2(1-z_2\bar w_2)^2}dV(w_1,w_2)\right|\nonumber\\&=\left|\int_{\mathbb D^2}\frac{w_2f(w_1w_2,w_2)}{\pi^2z_2(1-\frac{z_1}{z_2}\bar w_1)^2(1-z_2\bar w_2)^2}dV(w_1,w_2)\right|=\frac{1}{|z_2|}\left|P_{\mathbb D^2}(g)\left(\frac{z_1}{z_2},z_2\right)\right|.
\end{align}
Then there holds
\begin{align}\label{4.8}
&|\{(z_1,z_2)\in\mathbb H:|P(f)(z_1,z_2)|>\lambda\}|\nonumber\\=&\int_{\left\{(z_1,z_2)\in\mathbb H: \frac{1}{|z_2|}\left|P_{\mathbb D^2}(g)\left(\frac{z_1}{z_2},z_2\right)\right|>\lambda\right\}}dV(z_1,z_2)
\nonumber\\=&\int_{\left\{(z_1,z_2)\in\mathbb D^2: \frac{1}{|z_2|}\left|P_{\mathbb D^2}(g)\left({z_1},z_2\right)\right|>\lambda\right\}}|z_2|^2dV(z_1,z_2)
\nonumber\\=&\int_{\left\{(z_1,z_2)\in\mathbb D^2: |z_2|\leq\frac{1}{2} \text{ and }\frac{1}{|z_2|}\left|P_{\mathbb D^2}(g)\left({z_1},z_2\right)\right|>\lambda\right\}}|z_2|^2dV(z_1,z_2)\nonumber\\&+\int_{\left\{(z_1,z_2)\in\mathbb D^2: |z_2|>\frac{1}{2} \text{ and } \frac{1}{|z_2|}\left|P_{\mathbb D^2}(g)\left({z_1},z_2\right)\right|>\lambda\right\}}|z_2|^2dV(z_1,z_2).
\end{align}
We first consider the integral with $|z_2|\leq\frac{1}{2}$.
When $|z_2|\leq\frac{1}{2}$, the Bergman kernel $K_{\mathbb D^2}$ satisfies:
\begin{align}|K_{\mathbb D^2}(z_1,z_2;\bar w_1,\bar w_2)|=\frac{1}{\pi^2|1-z_1\bar w_1|^2|1-z_2\bar w_2|^2}\approx\frac{1}{\pi|1-z_1\bar w_1|^2}=|K_{\mathbb D}(z_1;\bar w_1)|.\end{align}
Therefore,
\begin{align}
|P_{\mathbb D^2}(g)\left({z_1},z_2\right)|&=\left|\int_{\mathbb D^2}\frac{g(w_1,w_2)}{\pi^2(1-z_1\bar w_1)^2(1-z_2\bar w_2)^2}dV(w_1,w_2)\right|\nonumber\\&\leq\int_{\mathbb D^2}\frac{|g(w_1,w_2)|}{\pi^2|1-z_1\bar w_1|^2|1-z_2\bar w_2|^2}dV(w_1,w_2)
\nonumber\\&\lesssim\int_{\mathbb D}\frac{\int_{\mathbb D}|g(w_1,w_2)|dV(w_2)}{\pi|1-z_1\bar w_1|^2}dV(w_1)\nonumber\\&=P^+_{\mathbb D}\left(\int_{\mathbb D}|g(w_1,w_2)|dV(w_2)\right)(z_1).
\end{align}
Set $G(w_1)=\int_{\mathbb D}|g(w_1,w_2)|dV(w_2)$. Then there exists a constant $C$ such that,
\begin{align}\label{4.101}
&\int_{\left\{(z_1,z_2)\in\mathbb D^2: |z_2|\leq\frac{1}{2} \text{ and }\frac{1}{|z_2|}\left|P_{\mathbb D^2}(g)\left({z_1},z_2\right)\right|>\lambda\right\}}|z_2|^2dV(z_1,z_2)\nonumber\\\leq&\int_{\left\{(z_1,z_2)\in\mathbb D^2: |z_2|\leq\frac{1}{2} \text{ and }\frac{1}{|z_2|}P^+_{\mathbb D}(G)\left({z_1}\right)>C\lambda\right\}}|z_2|^2dV(z_1,z_2)
\nonumber\\\leq&\int_{\mathbb D}\int_{\left\{z_2\in\mathbb D: |z_2|\leq\frac{1}{2} \text{ and }\frac{1}{C\lambda}P^+_{\mathbb D}(G)\left({z_1}\right)>|z_2|\right\}}|z_2|^2dV(z_2)d(z_1)
\nonumber\\\lesssim&\int_{\mathbb D}\int_{0}^{\frac{1}{C\lambda}P^+_{\mathbb D}(G)\left({z_1}\right)}r^3drd(z_1)
\lesssim\int_{\mathbb D}\frac{\left(P^+_{\mathbb D}(G)\left({z_1}\right)\right)^4}{\lambda^4}dV(z_1)=\frac{\|P^+_{\mathbb D}(G)\|^4_{L^4(\mathbb D)}}{\lambda^4}.
\end{align}
 Applying H\"older's inequality,
\begin{align}\label{4.11}
\int_{\mathbb D}G(w_1)^4dV(w_1)\lesssim \int_{\mathbb D}\int_{\mathbb D}|g(w_1,w_2)|^4dV(w_2)dV(w_1)=\|g\|^4_{L^4(\mathbb D^2)}\leq\|g\|^4_{L^4(\mathbb D^2,|w_2|^{-2})}.\end{align}
Thus $G(w_1)$ is in $L^4(\mathbb D)$. By (\ref{4.11}) and  the $L^p$ boundedness of $P^+_{\mathbb D}$, inequality (\ref{4.101}) yields
\begin{align}\label{4.12}
\int_{\left\{(z_1,z_2)\in\mathbb D^2: |z_2|\leq\frac{1}{2} \text{ and }\frac{1}{|z_2|}\left|P_{\mathbb D^2}(g)\left({z_1},z_2\right)\right|>\lambda\right\}}|z_2|^2dV(z_1,z_2)\lesssim\frac{\|g\|^4_{L^4( \mathbb D^2,|w_2|^{-2})}}{\lambda^4}=\frac{\|f\|^4_{L^4( \mathbb H)}}{\lambda^4}.
\end{align}
Now we turn to the integral in (\ref{4.8}) with $|z_2|> \frac{1}{2}$. By $|z_2|> \frac{1}{2}$, there holds $\frac{1}{|z_2|}<2$ and 
\begin{align}\label{4.13}
&\int_{\left\{(z_1,z_2)\in\mathbb D^2: |z_2|>\frac{1}{2} \text{ and } \frac{1}{|z_2|}\left|P_{\mathbb D^2}(g)\left({z_1},z_2\right)\right|>\lambda\right\}}|z_2|^2dV(z_1,z_2)\nonumber\\
\leq&\int_{\left\{(z_1,z_2)\in\mathbb D^2: \left|P_{\mathbb D^2}(g)\left({z_1},z_2\right)\right|>\frac{\lambda}{2}\right\}}|z_2|^2dV(z_1,z_2)
\nonumber\\
\leq&\frac{2^4\int_{\mathbb D^2} \left|P_{\mathbb D^2}(g)\left({z_1},z_2\right)\right|^4|z_2|^2dV(z_1,z_2)}{\lambda^4}
\nonumber\\
\leq&\frac{2^4\int_{\mathbb D^2} \left|P_{\mathbb D^2}(g)\left({z_1},z_2\right)\right|^4dV(z_1,z_2)}{\lambda^4}.
\end{align}
Since $P_{\mathbb D^2}$ is also $L^p$ bounded  for $1<p<\infty$, there holds
$$\frac{2^4\int_{\mathbb D^2} \left|P_{\mathbb D^2}(g)\left({z_1},z_2\right)\right|^4dV(z_1,z_2)}{\lambda^4}\lesssim\frac{\|g\|^4_{L^4( \mathbb D^2)}}{\lambda^4}\leq\frac{\|g\|^4_{L^4(\mathbb D^2,|w_2|^{-2})}}{\lambda^4}=\frac{\|f\|^4_{L^4( \mathbb H)}}{\lambda^4}.$$
Hence we also obtain the inequality 
\begin{align}\label{4.14}
&\int_{\left\{(z_1,z_2)\in\mathbb D^2: |z_2|>\frac{1}{2} \text{ and } \frac{1}{|z_2|}\left|P_{\mathbb D^2}(g)\left({z_1},z_2\right)\right|>\lambda\right\}}|z_2|^2dV(z_1,z_2)\lesssim\frac{\|f\|^4_{L^4( \mathbb H)}}{\lambda^4}.
\end{align}
Applying (\ref{4.12}) and (\ref{4.14}) to (\ref{4.8}) yields the desired weak-type (4,4) estimate.
\end{proof}
\begin{rmk} It can be shown that if the Bergman projection $P$ on a weighted space $L^p(\mathbb D^2,\mu)$ is of weak-type $(p,p)$, then $P$ is bounded on $L^p(\mathbb D^2,\mu)$. The idea of the proof can be found  in Theorem 1 in \cite{Rahm} and Theorem 1.2 in \cite{ZhenghuiWick2}. Theorem 4.2, on the other hand, shows a different phenomenon in the Hartogs triangle case: the Bergman projection on $\mathbb H$ is of weak-type $(4,4)$ but not $L^4$-bounded. This difference is caused by the fact that  while $L^{4}(\mathbb D^2,|z_2|^{-2})$ and $L^4(\mathbb H)$ are isometrically equivalent via the mapping $(z_1,z_2)\to (z_1z_2,z_2)$ between $\mathbb D\times \mathbb D\backslash\{0\}$ and $\mathbb H$,  the weak spaces $L^{4,\infty}(\mathbb D^2,|z_2|^{-2})$ and $L^{4,\infty}(\mathbb H)$ are not.\end{rmk}

\begin{rmk}
	Since the Bergman projection $P$ is self-adjoint, a duality argument together with Theorem 4.2 implies that $P$  is bounded from the Lorentz space $L^{4/3,1}(\mathbb H)$ to $L^{{4}/{3}}(\mathbb H)$. See for example Theorem 1.4.16 in \cite{Grafakos}.
\end{rmk}

\begin{rmk} Theorems 4.1 and 4.2 also provide an alternative proof of the $L^p$-regularity result for the Bergman projection on the Hartogs triangle: by interpolation, the weak type $(4,4)$ and $L^2$ regularity of the Bergman projection implies that the projection is $L^p$ bounded for $p\in [2,4)$. Then a duality argument yields the $L^p$ regularity for $p\in ({4}/{3},4)$. Since the projection is not of weak-type $({4}/{3},{4}/{3})$, it's not $L^{{4}/{3}}$ bounded, and hence not $L^p$ bounded for $p\notin ({4}/{3},4)$. Therefore the Bergman projection on $\mathbb H$ is $L^p$ bounded if and only if $p\in ({4}/{3},4)$.\end{rmk}

Using the same idea of the proof of Theorem 4.2, one can obtain the following weak-type estimate for $P$ near $L^{\frac{4}{3}}$. Here we provide another proof using Lemma 2.2.
\begin{thm}
	For any $\epsilon>0$, the Bergman projection $P$ on the Hartogs triangle satisfies the following weak-type estimate:
	\begin{equation}\label{4.16}
	|\{(z_1,z_2)\in\mathbb H:|P(f)(z_1,z_2)|>\lambda\}|\lesssim \frac{\|f\|^{{4}/{3}}_{L^{{4}/{3}}(\mathbb H,|z_2|^{-\epsilon})}}{\lambda^{{4}/{3}}}.
	\end{equation}
\end{thm}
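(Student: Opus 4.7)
The plan is to transplant the problem from $\mathbb H$ to $\mathbb D^2$ via the biholomorphism $(z_1,z_2)\mapsto(z_1z_2,z_2)$ used in Theorem 4.2, and then combine Chebyshev's inequality with a single application of Lemma 2.2. Setting $g(z_1,z_2)=z_2 f(z_1z_2,z_2)$ on $\mathbb D^2$ as in Theorem 4.2, a routine change of variables gives
\[\|f\|^{4/3}_{L^{4/3}(\mathbb H,|z_2|^{-\epsilon})}=\int_{\mathbb D^2}|g(z_1,z_2)|^{4/3}|z_2|^{2/3-\epsilon}\,dV,\]
while Theorem 4.2 yields $|P(f)(z_1,z_2)|=|z_2|^{-1}|P_{\mathbb D^2}(g)(z_1/z_2,z_2)|$ for $(z_1,z_2)\in\mathbb H$. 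After the analogous change of variables in the domain, the level-set measure becomes
\[|\{(z_1,z_2)\in\mathbb H:|P(f)|>\lambda\}|=\int_{\{(z_1,z_2)\in\mathbb D^2:|P_{\mathbb D^2}(g)(z_1,z_2)|>\lambda|z_2|\}}|z_2|^2\,dV.\]

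Chebyshev's inequality at the exponent $4/3$ then yields
\[\int_{\{|P_{\mathbb D^2}(g)|>\lambda|z_2|\}}|z_2|^2\,dV\leq\frac{1}{\lambda^{4/3}}\int_{\mathbb D^2}|P_{\mathbb D^2}(g)(z_1,z_2)|^{4/3}|z_2|^{2/3}\,dV.\]
Because $|z_2|\leq 1$ on $\mathbb D^2$, the weight $|z_2|^{2/3}$ can be enlarged to $|z_2|^{2/3-\epsilon}$. The proof therefore reduces to the strong-type bound
\[\int_{\mathbb D^2}|P_{\mathbb D^2}(g)|^{4/3}|z_2|^{2/3-\epsilon}\,dV\lesssim\int_{\mathbb D^2}|g|^{4/3}|z_2|^{2/3-\epsilon}\,dV.\]

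This last estimate is exactly where Lemma 2.2 enters. A direct computation of the Bekoll\'e-Bonami constant $B_{4/3}(|z_2|^\alpha)$ on $\mathbb D$, using that the decisive tent is $T_0=\mathbb D$ and that both $\int_{\mathbb D}|z_2|^\alpha dV$ and $\int_{\mathbb D}|z_2|^{-3\alpha}dV$ must converge, shows that $B_{4/3}(|z_2|^\alpha)<\infty$ precisely when $-2<\alpha<2/3$. The choice $\alpha=2/3-\epsilon$ lies in this range for every $\epsilon\in(0,8/3)$, so Lemma 2.2 guarantees that $P_{\mathbb D}$ is bounded on $L^{4/3}(\mathbb D,|z_2|^{2/3-\epsilon}\,dV)$. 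Since the weight depends only on $z_2$ and the product projection factors as $P_{\mathbb D^2}=P_1\circ P_2$ with $P_1$ unconditionally $L^{4/3}$-bounded in $z_1$, a straightforward Fubini argument upgrades this to the boundedness of $P_{\mathbb D^2}$ on $L^{4/3}(\mathbb D^2,|z_2|^{2/3-\epsilon})$, giving the desired inequality. The main obstacle is that the weight $|z_2|^{2/3}$ produced by Chebyshev sits exactly on the Bekoll\'e-Bonami boundary at $p=4/3$; the slack $|z_2|^{-\epsilon}$ in the hypothesis is precisely what moves us inside the class, consistent with Theorem 4.1, which rules out $\epsilon=0$.
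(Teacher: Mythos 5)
Your proof is correct and follows essentially the same route as the paper: both arguments transfer the problem to $\mathbb D^2$ via $g(z_1,z_2)=z_2f(z_1z_2,z_2)$, verify that the Bekoll\'e--Bonami constant $B_{4/3}(|w|^{2/3-\epsilon})$ is finite, upgrade to the bidisc by Fubini, and finish with Chebyshev. The only (cosmetic) difference is where the $\epsilon$ of slack is spent --- the paper applies Chebyshev on $\mathbb H$ using $|z_2|^{-\epsilon}\geq 1$ and then proves strong $L^{4/3}(\mathbb H,|z_2|^{-\epsilon})$ boundedness, whereas you apply Chebyshev after the transfer and enlarge $|z_2|^{2/3}$ to $|z_2|^{2/3-\epsilon}$; for $\epsilon\geq 8/3$ one should note the statement follows from the case of smaller $\epsilon$ by monotonicity of the weighted norms.
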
 
\begin{proof}We claim that the Bergman projection is bounded on $L^{\frac{4}{3}}(\mathbb H,|z_2|^{-\epsilon})$. Then the desired estimate holds:
	\begin{align}
		|\{(z_1,z_2)\in\mathbb H:|P(f)(z_1,z_2)|>\lambda\}|\leq&
		\int_{\{(z_1,z_2)\in\mathbb H:|P(f)(z_1,z_2)|>\lambda\}}|z_2|^{-\epsilon}dV(z_1,z_2)\nonumber\\
		\leq&\int_{\mathbb H}\frac{|P(f)(z_1,z_2)|^{{4}/{3}}}{\lambda^{{4}/{3}}}|z_2|^{-\epsilon}dV(z_1,z_2)\nonumber\\\lesssim&\frac{\|f\|^{{4}/{3}}_{L^{{4}/{3}}(\mathbb H,|z_2|^{-\epsilon})}}{\lambda^{{4}/{3}}}.
	\end{align}
	To prove the claim, we recall that for any given $f\in L^{\frac{4}{3}}(\mathbb H,|z_2|^{-\epsilon})$, the induced function  $g(w_1,w_2):=w_2f(w_1w_2,w_2)$ is in $L^{\frac{4}{3}}(\mathbb D^2,|z_2|^{\frac{2}{3}-\epsilon})$. Moreover,
	\begin{align}
	|P(f)(z_1,z_2)|&=\left|\int_{\mathbb H}\frac{f(w_1,w_2)}{\pi^2z_2\bar w_2(1-\frac{z_1\bar w_1}{z_2 \bar w_2})^2(1-z_2\bar w_2)^2}dV(w_1,w_2)\right|\nonumber\\&=\left|\int_{\mathbb D^2}\frac{w_2f(w_1w_2,w_2)}{\pi^2z_2(1-\frac{z_1}{z_2}\bar w_1)^2(1-z_2\bar w_2)^2}dV(w_1,w_2)\right|=\frac{1}{|z_2|}\left|P_{\mathbb D^2}(g)\left(\frac{z_1}{z_2},z_2\right)\right|.\nonumber
	\end{align}
	Then it is easy to see that the two operator norms $\|P\|_{L^{\frac{4}{3}}(\mathbb H,|z_2|^{-\epsilon})}$ and $\|P_{\mathbb D^2}\|_{L^{\frac{4}{3}}(\mathbb D^2,|z_2|^{\frac{2}{3}-\epsilon})}$ are identical.
We first show that $P_{\mathbb D}$ is bounded on $L^{\frac{4}{3}}(\mathbb D,|w|^{\frac{2}{3}-\epsilon})$. Recall the Carleson tent $T_z$ in Lemma 2.2. When $|z|>\frac{1}{2}$, the function $|w|\approx 1$ for all $w\in T_z$. Hence for $|z|>\frac{1}{2}$, we have
	\begin{equation}\label{4.261}
	\frac{\int_{T_z}|w|^{\frac{2}{3}-\epsilon}dV(w)\left(\int_{ T_z}|w|^{(\frac{2}{3}-\epsilon)\frac{1}{1-p}}dV(w)\right)^{p-1}}{\left(V(T_z)\right)^{p}}\lesssim 1.
	\end{equation}
	For $|z|\leq \frac{1}{2}$, the Lebesgue measure of $|T_z|\approx 1$. Thus for $p=\frac{4}{3}$, there holds
	
	\begin{align}\label{4.27}&\frac{\int_{T_z}|w|^{\frac{2}{3}-\epsilon}dV(w)\left(\int_{ T_z}|w|^{(\frac{2}{3}-\epsilon)\frac{1}{1-p}}dV(w)\right)^{p-1}}{\left|T_z\right|^{p}}\nonumber\\\lesssim& \int_{\mathbb D}|w|^{\frac{2}{3}-\epsilon}dV(w)\left(\int_{\mathbb D}|w|^{(\frac{2}{3}-\epsilon)\frac{1}{1-p}}dV(w)\right)^{p-1}<\infty.\end{align}
 Combining (\ref{4.261}) and (\ref{4.27}) yields that the Bekoll\'e-Bonami constant $B_{\frac{4}{3}}(|w|^{\frac{2}{3}-\epsilon})$ is finite. Then Lemma 2.2 implies the boundedness of $P_{\mathbb D}$ on $L^{\frac{4}{3}}(\mathbb D,|w|^{\frac{2}{3}-\epsilon})$. Note that $P_{\mathbb D^2}=P_2\circ P_1$ where $P_j$ is the projection in the variable $z_j$. There holds via Fubini's theorem that
 \begin{align}
 \|P_{\mathbb D^2}(g)\|^{\frac{4}{3}}_{L^{\frac{4}{3}}(\mathbb D^2,|z_2|^{\frac{2}{3}-\epsilon})}=&\int_{\mathbb D^2}|P_{\mathbb D^2}(g)(z_1,z_2)|^{\frac{4}{3}}|z_2|^{\frac{2}{3}-\epsilon}dV(z_1,z_2)\nonumber\\=&\int_{\mathbb D^2}|P_2P_1(g)(z_1,z_2)|^{\frac{4}{3}}|z_2|^{\frac{2}{3}-\epsilon}dV(z_1,z_2)\nonumber\\=&\int_{\mathbb D}\|P_2P_1(g)(z_1,\cdot)\|^{\frac{4}{3}}_{L^{\frac{4}{3}}(\mathbb D,|z|^{\frac{2}{3}-\epsilon})}dV(z_1)\nonumber\\\lesssim&\int_{\mathbb D}\|P_1(g)(z_1,\cdot)\|^{\frac{4}{3}}_{L^{\frac{4}{3}}(\mathbb D,|z|^{\frac{2}{3}-\epsilon})}dV(z_1)=\|P_1(g)\|^{\frac{4}{3}}_{L^{\frac{4}{3}}(\mathbb D^2,|z_2|^{\frac{2}{3}-\epsilon})}.
 \end{align}
 By Fubini's theorem again, $P_1$ is bounded on $L^{\frac{4}{3}}(\mathbb D^2,|z_2|^{\frac{2}{3}-\epsilon})$. Thus
 $$ \|P_{\mathbb D^2}(g)\|^{\frac{4}{3}}_{L^{\frac{4}{3}}(\mathbb D^2,|z_2|^{\frac{2}{3}-\epsilon})}\lesssim\|P_1(g)\|^{\frac{4}{3}}_{L^{\frac{4}{3}}(\mathbb D^2,|z_2|^{\frac{2}{3}-\epsilon})}\lesssim \|g\|^{\frac{4}{3}}_{L^{\frac{4}{3}}(\mathbb D^2,|z_2|^{\frac{2}{3}-\epsilon})}.$$
 The boundedness of  $P_{\mathbb D^2}$ on ${L^{\frac{4}{3}}(\mathbb D^2,|z_2|^{\frac{2}{3}-\epsilon})}$ then implies the boundedness of $P$ on ${L^{\frac{4}{3}}(\mathbb H,|z_2|^{-\epsilon})}$, which completes the proof of the claim.
\end{proof}
Note that for $\alpha<-1$, the integral
 \begin{align}\label{4.21}\int_{\mathbb D}|z|^{-2}(-\log|z|+1)^{\alpha}dV(z)=&2\pi\int_0^1r^{-1}(-\log r+1)^\alpha dr\nonumber\\=&2\pi\int_{-\infty}^0(-t+1)^\alpha dt=\frac{-2\pi}{\alpha+1}<\infty.\end{align} An similar argument as in inequalities (\ref{4.261}) and (\ref{4.27}) then implies that the Bekoll\'e-Bonami constant $B_{\frac{4}{3}}(|z|^{\frac{2}{3}}(-\log|z|+1)^{\epsilon})$ is finite for $\epsilon>\frac{1}{3}$. Since $x^{-a}\gtrsim -\log x+1\geq 1$ on $(0,1)$ for all $a>0$, replacing $|z_2|^{-\epsilon}$ by $(-\log|z_2|+1)^{\epsilon}$ in the proof of Theorem 4.6 yields a better estimate:
\begin{thm}
	For any $\epsilon>\frac{1}{3}$, the Bergman projection $P$ on the Hartogs triangle satisfies the following weak-type estimate:
	\begin{equation}\label{4.22}
	|\{(z_1,z_2)\in\mathbb H:|P(f)(z_1,z_2)|>\lambda\}|\lesssim \frac{\|f\|^{{4}/{3}}_{L^{{4}/{3}}(\mathbb H,(-\log|z_2|+1)^{\epsilon})}}{\lambda^{{4}/{3}}}.
	\end{equation}
\end{thm}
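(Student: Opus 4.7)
The plan is to follow the scheme of Theorem~4.6 verbatim, simply replacing the power weight $|z_2|^{-\epsilon}$ by the logarithmic weight $(-\log|z_2|+1)^{\epsilon}$. Given $f \in L^{4/3}(\mathbb H,(-\log|z_2|+1)^{\epsilon})$, set $g(w_1,w_2):=w_2 f(w_1w_2,w_2)$ as before. A direct change of variable computation (using that the Jacobian of $(w_1,w_2)\mapsto (w_1 w_2, w_2)$ contributes $|w_2|^{-2}$) shows that $g$ lies in $L^{4/3}(\mathbb D^2,|z_2|^{2/3}(-\log|z_2|+1)^{\epsilon})$ with equal norm, and the identity $|P(f)(z_1,z_2)| = |z_2|^{-1}|P_{\mathbb D^2}(g)(z_1/z_2,z_2)|$ again identifies the operator norms. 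The trivial weak-type bound via Chebyshev's inequality applied to the weight will then reduce the theorem to showing that $P_{\mathbb D^2}$ is bounded on $L^{4/3}(\mathbb D^2,|z_2|^{2/3}(-\log|z_2|+1)^{\epsilon})$.

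Since the weight depends only on $z_2$, the same Fubini plus iteration argument used at the end of Theorem~4.6 reduces the two-variable bound to showing $P_{\mathbb D}$ is bounded on $L^{4/3}(\mathbb D,|w|^{2/3}(-\log|w|+1)^{\epsilon})$; the projection in the unweighted variable $z_1$ is trivially controlled by the standard $L^{4/3}$ boundedness of $P_{\mathbb D}$. So the entire argument comes down to verifying finiteness of the Bekoll\'e-Bonami constant $B_{4/3}\bigl(|w|^{2/3}(-\log|w|+1)^{\epsilon}\bigr)$ and invoking Lemma~2.2.

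For the Bekoll\'e-Bonami computation, I split into $|z|>1/2$ and $|z|\le 1/2$ exactly as in (\ref{4.261})--(\ref{4.27}). On $|z|>1/2$ the weight and its conjugate power are both comparable to constants on the Carleson tent $T_z$, so the tent average is uniformly bounded. On $|z|\le 1/2$ the tent $T_z$ has Lebesgue measure comparable to $1$, so the test reduces to checking that
\begin{equation*}
\int_{\mathbb D}|w|^{2/3}(-\log|w|+1)^{\epsilon}\,dV(w)
\quad\text{and}\quad
\int_{\mathbb D}|w|^{-2}(-\log|w|+1)^{-3\epsilon}\,dV(w)
\end{equation*}
are finite, the exponent $-3$ coming from $\frac{1}{1-p}=-3$ at $p=4/3$. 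The first integral is harmless; the second is controlled by the displayed calculation (\ref{4.21}), which is finite precisely when $-3\epsilon<-1$, i.e. $\epsilon>1/3$. This is where the hypothesis on $\epsilon$ is used.

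The only conceptual subtlety, and the one the surrounding remark is pointing at, is that Theorem~4.7 is genuinely stronger than Theorem~4.6: since $|z_2|^{-\epsilon}\gtrsim (-\log|z_2|+1)^{\epsilon}$ on $(0,1)$, the weighted $L^{4/3}$ norm on the right-hand side of (\ref{4.22}) is dominated by the one in (\ref{4.16}), so the new estimate implies the old. The routine adaptation is pleasant; I expect no real obstacle beyond bookkeeping of exponents in the Bekoll\'e-Bonami test, which is entirely parallel to the calculation already laid out in Theorem~4.6.
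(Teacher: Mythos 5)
Your proposal is correct and follows essentially the same route as the paper: the authors likewise reduce the statement to the finiteness of the Bekoll\'e--Bonami constant $B_{4/3}\bigl(|w|^{2/3}(-\log|w|+1)^{\epsilon}\bigr)$ via the transfer to $\mathbb D^2$ and the Fubini iteration from Theorem 4.6, with the hypothesis $\epsilon>1/3$ entering exactly where you place it, namely in the convergence of $\int_{\mathbb D}|w|^{-2}(-\log|w|+1)^{-3\epsilon}\,dV(w)$ as computed in \eqref{4.21}. No substantive differences to report.
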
 
 For $\epsilon\leq\frac{1}{3}$, Theorem 4.7 does not hold anymore. Below we provide an example to illustrate the failure of the estimate (\ref{4.22}) when $\epsilon=\frac{1}{3}$. 

 Set $f_p(z)=\bar z_2|z_2|^{-p^\prime}(-\log|z_2|+1)^{-1}$. Then 
\begin{align}\label{4.23}
\|f_p\|^{{4}/{3}}_{L^{{4}/{3}}(\mathbb H,(-\log|z_2|+1)^{{1}/{3}})}=&\int_{\mathbb H}|z_2|^{\frac{4}{3}(1-p^\prime)}(-\log|z_2|+1)^{-1}dV(z_1,z_2)\nonumber\\=&\int_{\mathbb D^2}|z_2|^{2+\frac{4}{3}(1-p^\prime)}(-\log|z_2|+1)^{-1}dV(z_1,z_2)\nonumber\\=&2\pi^2\int_{0}^1x^{3+\frac{4}{3}(1-p^\prime)}(-\log x+1)^{-1}dx\nonumber\\=&2\pi^2 e^{4+\frac{4}{3}(1-p^\prime)}\text{E}_1(4+\frac{4}{3}(1-p^\prime)),
\end{align}
where $E_1$ is  the \emph{exponential integral} defined by $$\text{E}_1(x)=\int_{x}^{\infty}t^{-1}e^{-t}d t.$$ Note that (\hspace{1sp}\cite{Abramowitz}, p. 229, 5.1.20)
$$\frac{1}{2}e^{-x}\log\left(1+\frac{2}{x}\right)<\text{E}_1(x)<e^{-x}\log\left(1+\frac{1}{x}\right).$$
Therefore as $p\to \frac{4}{3}$, there holds $4+\frac{4}{3}(1-p^\prime)\to 0$ and 
\begin{align}
\text{E}_1\left(4+\frac{4}{3}(1-p^\prime)\right)\approx \log\left(\frac{1}{4+\frac{4}{3}(1-p^\prime)}\right)\approx\log\left(\frac{1}{3p-4}\right).
\end{align}
Substituting this back into (\ref{4.23}) yields $\|f_p\|^{{4}/{3}}_{L^{{4}/{3}}(\mathbb H,(-\log|z_2|+1)^{{1}/{3}})}\approx \log\left(\frac{1}{3p-4}\right)$. On the other hand, 
\begin{align}
|P(f_p)(z_1,z_2)|&=\left|\int_{\mathbb H}\frac{1}{z_2\bar w_2\|w_2^{-1}\|^2_{L^2}}\bar w_2|w_2|^{-p^\prime}(-\log|z_2|+1)^{-1}dV(w_1,w_2)\right|\nonumber\\&=\pi^{-2}\left|\int_{\mathbb H}\frac{1}{z_2}|w_2|^{-p^\prime}(-\log|z_2|+1)^{-1}dV(w_1,w_2)\right|\approx\frac{1}{|z_2|}\log\left(\frac{1}{3p-4}\right).
\end{align}
Therefore, when $\log\left(\frac{1}{3p-4}\right)\frac{1}{\lambda}$ is small, we have
\begin{align}
&|\{(z_1,z_2)\in\mathbb H:|P(f_p)(z_1,z_2)|>\lambda\}|\nonumber\\\gtrsim&\int_{\left\{(z_1,z_2)\in\mathbb H:|z_2|<\log\left(\frac{1}{3p-4}\right)\frac{1}{\lambda}\right\}}dV(z_1,z_2)
\nonumber\\=&\int_{\left\{(z_1,z_2)\in\mathbb D^2:|z_2|<\log\left(\frac{1}{3p-4}\right)\frac{1}{\lambda}\right\}}|z_2|^2dV(z_1,z_2)\nonumber\\=&\frac{\pi^2}{4}\left(\log\left(\frac{1}{3p-4}\right)\frac{1}{\lambda}\right)^4\approx\frac{\|f_p\|^{{4}/{3}}_{L^{{4}/{3}}}}{\lambda^{{4}/{3}}}\left(\log\left(\frac{1}{p-{4}/{3}}\right)\right)^{3}\frac{1}{\lambda^{{8}/{3}}}.
\end{align}
Setting $p={4}/{3}+\exp\left\{-\lambda^{{9}/{10}}\right\}$, then $\log\left(\frac{1}{3p-4}\right)\frac{1}{\lambda}$ still goes to 0 as $\lambda$ tends to $\infty$. Hence $\frac{p-1}{(3p-{4})\lambda}<1$ holds. Note that
$\left(\log\left(\frac{1}{p-{4}/{3}}\right)\right)^{3}\frac{1}{\lambda^{{8}/{3}}}=\lambda^{{1}/{30}},$
which is blowing up as $\lambda$ tends to $\infty$. Therefore, the weak-type estimate
$$|\{(z_1,z_2)\in\mathbb H:|P(f_p)(z_1,z_2)|>\lambda\}|\lesssim\frac{\|f\|^{{4}/{3}}_{L^{{4}/{3}}(\mathbb H,(-\log|z_2|+1)^{\epsilon})}}{\lambda^{{4}/{3}}} \;\;\text{ fails for }\epsilon={1}/{3}.$$
\begin{rmk} Estimate (\ref{4.22})  in Theorem 4.7 is a consequence of the finite integral in (\ref{4.21}) and the Bekoll\'e-Bonami theory on the unit disc. The integrand $|z|^{-2}(-\log|z|+1)^\alpha$ blows up at a slower speed near the origin than $|z|^{-2}$ and hence is in $L^1(\mathbb D)$. Similarly, one can construct an integrable function $|z|^{-2}(-\log|z|+1)^{-1}(\log(-\log|z|+1)+1)^{\alpha}$ with $\alpha<-1$ from $|z|^{-2}(-\log|z|+1)^{-1}$. Iterating this process, we obtain a sequence of functions $\{f_{\alpha,j}(z)\}$ in $L^1(\mathbb D)$ where $f_{\alpha,j}(z)=|z|^{-2}h_j^{\alpha}(z)\prod_{k=1}^{j-1}h_k^{-1}(z)$ with $\alpha<-1$ and $h_j(z)$ defined as follows:\begin{enumerate}
		\item $h_1(z)=-\log|z|+1$;
		\item  $h_{j+1}(z)=\log(h_j(z)+1)+1$ for $j>0$.
	\end{enumerate}
Then repeating the argument for (\ref{4.261}) and (\ref{4.27}) yields that the Bekoll\'e-Bonami constant $B_{\frac{4}{3}}(f^{-{1}/{3}}_{\alpha,j})<\infty$. Using this fact, (\ref{4.22}) can be generalized as below:
\begin{thm}
	Let $f_{\alpha,j}$ be defined as above. For any $\alpha<-1$, the Bergman projection $P$ on the Hartogs triangle satisfies the following weak-type estimate:
	\begin{equation}
	|\{(z_1,z_2)\in\mathbb H:|P(f)(z_1,z_2)|>\lambda\}|\lesssim \frac{\|f\|^{{4}/{3}}_{L^{{4}/{3}}\left(\mathbb H,\left(|z_2|^{2}f_{\alpha,j}(z_2)\right)^{-{1}/{3}}\right)}}{\lambda^{{4}/{3}}}.
	\end{equation}
\end{thm}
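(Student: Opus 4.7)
The plan is to follow the pattern of Theorems 4.6 and 4.7: reduce the weak-type estimate to a weighted strong $L^{4/3}$ bound for $P$, transfer it through the biholomorphism $(z_1,z_2)\mapsto(z_1 z_2, z_2)$ to an analogous bound for $P_{\mathbb D^2}$, and verify the Bekoll\'e--Bonami constant for the one-variable weight appearing on $\mathbb D$.

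First, Chebyshev's inequality reduces (4.28) to showing that $P$ is bounded on $L^{4/3}(\mathbb H, (|z_2|^2 f_{\alpha,j}(z_2))^{-1/3})$, exactly as in the opening paragraph of the proof of Theorem 4.6. Using $g(w_1,w_2)=w_2 f(w_1 w_2, w_2)$ together with the pullback identity $P(f)(z_1,z_2)=|z_2|^{-1} P_{\mathbb D^2}(g)(z_1/z_2, z_2)$, this is equivalent to showing $P_{\mathbb D^2}$ is bounded on $L^{4/3}(\mathbb D^2, f_{\alpha,j}(w_2)^{-1/3})$: the Jacobian factor $|w_2|^2$ together with the $|z_2|^{-2/3}$ portion of the weight produces exactly the $|w_2|^{4/3}$ needed to absorb the $w_2$ in $g$, leaving only $f_{\alpha,j}(w_2)^{-1/3}$. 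By Fubini and the factorization $P_{\mathbb D^2}=P_2\circ P_1$ used at the end of the proof of Theorem 4.6, this in turn reduces to boundedness of $P_{\mathbb D}$ on $L^{4/3}(\mathbb D, f_{\alpha,j}(w)^{-1/3})$, since the weight is trivial in the first variable.

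By Lemma 2.2 with $p=4/3$, the dual weight is $(f_{\alpha,j}^{-1/3})^{-3}=f_{\alpha,j}$, so the task becomes showing
$$B_{4/3}\!\left(f_{\alpha,j}^{-1/3}\right) = \sup_{z\in\mathbb D}\frac{\int_{T_z} f_{\alpha,j}^{-1/3}\,dV}{|T_z|}\left(\frac{\int_{T_z} f_{\alpha,j}\,dV}{|T_z|}\right)^{1/3} < \infty.$$
For $|z|>1/2$, every $w\in T_z$ satisfies $|w|\approx 1$, so $h_1(w)\approx 1$ and inductively $h_k(w)\approx 1$ for all $k$; hence $f_{\alpha,j}(w)\approx 1$ on $T_z$ and the product of averages is $\lesssim 1$, as in (4.26). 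For $|z|\leq 1/2$, $|T_z|\approx 1$, so the supremum reduces to the two integrability statements $f_{\alpha,j}\in L^1(\mathbb D)$ and $f_{\alpha,j}^{-1/3}\in L^1(\mathbb D)$, in direct analogy with (4.27).

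The main obstacle is verifying $f_{\alpha,j}\in L^1(\mathbb D)$, the iterated version of (4.21). The natural approach is induction on $j$ using the substitution $s=h_k(r)$ at the $k$-th stage, which peels one logarithm off the integrand and reduces the problem to the previous step; the base case is precisely (4.21). The other integrability, $\int_{\mathbb D}|z|^{2/3}h_j(z)^{-\alpha/3}\prod_{k<j}h_k(z)^{1/3}\,dV<\infty$, is immediate, since the iterated logarithms $h_k$ are dominated near the origin by any negative power of $|z|$ and $|z|^{2/3-\epsilon}$ remains integrable for small $\epsilon>0$. Once these two ingredients are assembled, the argument is a mechanical repetition of the passage from the one-variable weighted bound to the full estimate at the end of the proof of Theorem 4.6, followed by Chebyshev as in (4.18).
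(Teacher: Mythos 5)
Your proposal is correct and follows essentially the same route the paper intends: reduce via Chebyshev to weighted $L^{4/3}$ boundedness, transfer through $(z_1,z_2)\mapsto(z_1z_2,z_2)$ so that the weight on $\mathbb D^2$ becomes exactly $f_{\alpha,j}(z_2)^{-1/3}$, and verify $B_{4/3}(f_{\alpha,j}^{-1/3})<\infty$ by splitting into $|z|>1/2$ and $|z|\le 1/2$, with the key integrability $f_{\alpha,j}\in L^1(\mathbb D)$ obtained by iterating the substitution behind (4.21). This is precisely the ``repeating the argument for (4.26) and (4.27)'' that the paper's Remark 4.8 sketches, so no further comparison is needed.
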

\end{rmk}
\begin{rmk} Despite Theorems 4.6, 4.7 and 4.9, a sharp weak-type estimate for $P$ near $L^{{4}/{3}}$ is still unknown to us. One of our guesses is the weak-type $L^{{4}/{3}}(\log^+L)^{\alpha}$ estimate.  For $p>{4}/{3}$ and $\alpha>0$, there holds $\|\bar z_2|z_2|^{-p}\|^{{4}/{3}}_{L^{{4}/{3}}(\log^+L)^{\alpha}}\lesssim\left(p-{4}/{3}\right)^{-\alpha-1}.$ Then an argument as in the proof of Theorem 4.1 would  imply that the projection $P$ is not of weak-type $L^{{4}/{3}}(\log^+L)^{\alpha}$ for $\alpha<{1}/{3}$. For $\alpha={1}/{3}$, the estimate holds for $f_p(z)=\bar z_2|z_2|^{-p^\prime}$ which is served as a counterexample in the proof of Theorem 4.1. Hence we suspect that the Bergman projection is bounded from $L^{{4}/{3}}(\log^+L)^{{1}/{3}}(\mathbb H)$ to $L^{{4}/{3},\infty}(\mathbb H)$. We hope to further investigate it in the future.\end{rmk}
\begin{rmk} $L^p$ regularity of the Bergman projection has also been studied on various generalizations of the Hartogs triangle. See for instance \cite{EM, EM2, CHEN}. It would be interesting to study the weak-type regularity of the Bergman projection in those settings.\end{rmk}

\vskip 5pt

\bibliographystyle{alpha}
\bibliography{2}
\end{document}